\documentclass[12pt]{article}
\usepackage{graphicx}
\usepackage{amsmath}
\usepackage{amsfonts}
\usepackage{amsthm}
\usepackage[T1]{fontenc}
\usepackage{url}
\usepackage{color}
\usepackage[margin=1in]{geometry}
% Load packages
\usepackage{amssymb,bm}
\usepackage{amsmath}
\usepackage{amsthm}
\usepackage{graphicx}
\usepackage[active]{srcltx} % Includes line number info into dvi file
\usepackage{hyperref}
\hypersetup{pdfborder=0 0 0}

% SET MARGINS
\setlength{\oddsidemargin}{0in}
\setlength{\evensidemargin}{-0.0625in}
\setlength{\textwidth}{6.5in}
\setlength{\topmargin}{-.5in}
\setlength{\textheight}{8.8in}

% Equation numbering

%Theorems and Proofs
\newtheorem{theorem}{{\sc Theorem}}[section]

\newtheorem{lemma}[theorem]{{\sc Lemma}}

% Blackboard-bold font

%Useful miscellani

% better than Av:

\def\XXint#1#2#3{{\setbox0=\hbox{$#1{#2#3}{\int}$ }
\vcenter{\hbox{$#2#3$ }}\kern-.6\wd0}}

%Hot words

%          ABCs

% Abbreviate definitions of greek symbols

\newcommand{\Gk}{\kappa}

\newcommand{\Gth}{\theta}

% Abbreviate definitions of bold greek symbols
\bmdefine\BGa{\alpha}
\bmdefine\BGb{\beta}
\bmdefine\BGd{\delta}
\bmdefine\BGe{\epsilon}
\bmdefine\BGve{\varepsilon}
\bmdefine\BGf{\phi}
\bmdefine\BGvf{\varphi}
\bmdefine\BGg{\gamma}
\bmdefine\BGc{\chi}
\bmdefine\BGi{\iota}
\bmdefine\BGk{\kappa}
\bmdefine\BGl{\lambda}
\bmdefine\BGn{\eta}
\bmdefine\BGm{\mu}
\bmdefine\BGv{\nu}
\bmdefine\BGp{\pi}
\bmdefine\BGth{\theta}
\bmdefine\BGvth{\vartheta}
\bmdefine\BGr{\rho}
\bmdefine\BGvr{\varrho}
\bmdefine\BGs{\sigma}
\bmdefine\BGvs{\varsigma}
\bmdefine\BGt{\tau}
\bmdefine\BGj{\tau}
\bmdefine\BGu{\upsilon}
\bmdefine\BGo{\omega}
\bmdefine\BGx{\xi}
\bmdefine\BGy{\psi}
\bmdefine\BGz{\zeta}
\bmdefine\BGD{\Delta}
\bmdefine\BGF{\Phi}
\bmdefine\BGG{\Gamma}
\bmdefine\BGL{\Lambda}
\bmdefine\BGP{\Pi}
\bmdefine\BGT{\Theta}
\bmdefine\BGS{\Sigma}
\bmdefine\BGU{\Upsilon}
\bmdefine\BGO{\Omega}
\bmdefine\BGX{\Xi}
\bmdefine\BGY{\Psi}

% Abbreviate symbols for caligraphic letters

%\newcommand{\CD}{{\mathcal D}}  % conflicts with commutative diagram package

% Abbreviate symbols for bold caligraphic letters
\bmdefine\BCA{{\mathcal A}}
\bmdefine\BCB{{\mathcal B}}
\bmdefine\BCC{{\mathcal C}}
\bmdefine\BCD{{\mathcal D}}
\bmdefine\BCE{{\mathcal E}}
\bmdefine\BCF{{\mathcal F}}
\bmdefine\BCG{{\mathcal G}}
\bmdefine\BCH{{\mathcal H}}
\bmdefine\BCI{{\mathcal I}}
\bmdefine\BCJ{{\mathcal J}}
\bmdefine\BCK{{\mathcal K}}
\bmdefine\BCL{{\mathcal L}}
\bmdefine\BCM{{\mathcal M}}
\bmdefine\BCN{{\mathcal N}}
\bmdefine\BCO{{\mathcal O}}
\bmdefine\BCP{{\mathcal P}}
\bmdefine\BCQ{{\mathcal Q}}
\bmdefine\BCR{{\mathcal R}}
\bmdefine\BCS{{\mathcal S}}
\bmdefine\BCT{{\mathcal T}}
\bmdefine\BCU{{\mathcal U}}
\bmdefine\BCV{{\mathcal V}}
\bmdefine\BCW{{\mathcal W}}
\bmdefine\BCX{{\mathcal X}}
\bmdefine\BCY{{\mathcal Y}}
\bmdefine\BCZ{{\mathcal Z}}

% Abbreviate symbols for bold-face letters
\bmdefine\Bzr{ 0}
\bmdefine\Ba{ a}
\bmdefine\Bb{ b}
\bmdefine\Bc{ c}
\bmdefine\Bd{ d}
\bmdefine\Be{ e}
\bmdefine\Bf{ f}
\bmdefine\Bg{ g}
\bmdefine\Bh{ h}
\bmdefine\Bi{ i}
\bmdefine\Bj{ j}
\bmdefine\Bk{ k}
\bmdefine\Bl{ l}
\bmdefine\Bm{ m}
\bmdefine\Bn{ n}
\bmdefine\Bo{ o}
\bmdefine\Bp{ p}
\bmdefine\Bq{ q}
\bmdefine\Br{ r}
\bmdefine\Bs{ s}
\bmdefine\Bt{ t}
\bmdefine\Bu{ u}
\bmdefine\Bv{ v}
\bmdefine\Bw{ w}
\bmdefine\Bx{ x}
\bmdefine\By{ y}
\bmdefine\Bz{ z}
\bmdefine\BA{ A}
\bmdefine\BB{ B}
\bmdefine\BC{ C}
\bmdefine\BD{ D}
\bmdefine\BE{ E}
\bmdefine\BF{ F}
\bmdefine\BG{ G}
\bmdefine\BH{ H}
\bmdefine\BI{ I}
\bmdefine\BJ{ J}
\bmdefine\BK{ K}
\bmdefine\BL{ L}
\bmdefine\BM{ M}
\bmdefine\BN{ N}
\bmdefine\BO{ O}
\bmdefine\BP{ P}
\bmdefine\BQ{ Q}
\bmdefine\BR{ R}
\bmdefine\BS{ S}
\bmdefine\BT{ T}
\bmdefine\BU{ U}
\bmdefine\BV{ V}
\bmdefine\BW{ W}
\bmdefine\BX{ X}
\bmdefine\BY{ Y}
\bmdefine\BZ{ Z}

% Sans Serif capital letters

%%% Local Variables: 
%%% mode: latex
%%% TeX-master: t
%%% End: 

\date{}
\begin{document}
\title{On the Korn interpolation and second inequalities in thin domains}
\author{D. Harutyunyan}
\maketitle

\begin{abstract}
We consider shells of non-constant thickness in three dimensional Euclidean space around surfaces which have bounded principal curvatures. We derive Korn's interpolation (or the so called first and a half\footnote{The inequality first introduced in [\ref{bib:Gra.Har.1}]}) and second inequalities on that kind of domains for $\Bu\in H^1$ vector fields, imposing no boundary or normalization conditions on $\Bu.$ The constants in the estimates are asymptotically optimal in terms of the domain thickness $h,$ with the leading order constant having the scaling $h$ as $h\to 0.$ This is the first work that determines the asymptotics of the optimal constant in the classical Korn second inequality for shells in terms of the domain thickness in almost full generality, the inequality being fulfilled for practically all thin domains $\Omega\in\mathbb R^3$ and all vector fields $\Bu\in H^1(\Omega).$ Moreover, the Korn interpolation inequality is stronger than Korn's second inequality, and it reduces the problem of estimating the gradient $\nabla\Bu$ in terms of the symmetrized gradient $e(\Bu)$, in particular any linear geometric rigidity estimates for thin domains, to the easier problem of proving the corresponding Poincar\'e-like estimates on the field $\Bu$ itself.
\end{abstract}

\section{Introduction}
\label{sec:1}
A shell of thickness $h$ in three dimensional Euclidean space is given by $\Omega=\{x+t\Bn(x) \ : \ x\in S,\ t\in [-h/2,h/2]\},$ where $S\subset\mathbb R^3$ is a bounded connected smooth enough regular surface with a unit normal $\Bn(x)$ at the point $x\in S.$ The surface $S$ is called the mid-surface of the shell $\Omega.$ Assume as above $S\subset\mathbb R^3$ is a bounded connected smooth enough regular surface with a unit normal $\Bn(x)$ at the point $x\in S.$ Let $h>0$ be a small parameter and assume the family of Lipschitz functions $g_1^h(x),g_2^h(x)\colon S\to (0,\infty)$ satisfy the uniform conditions
\begin{equation}
\label{1.1}
h\leq g_1^h(x),g_2^h(x)\leq c_1 h,\quad \text{and}\quad |\nabla g_1^h(x)|+|\nabla g_2^h(x)|\leq c_2h,\quad\text{for all}\quad x\in S.
\end{equation}
Then the set $\Omega^h=\{x+t\Bn(x) \ : \ x\in S,\ t\in [-g_1^h(x),g_2^h(x)]\}$ is a shell with thickness of order $h;$ which we will also call a thin domain. Understanding the rigidity of shells and more generally thin spatial domains is one of the challenges in nonlinear elasticity, where there are still many open questions. Unlike the situation for shells in general, the rigidity of plates has been quite well understood based on the work of Friesecke, James and M\"uller  [\ref{bib:Fri.Jam.Mue.1},\ref{bib:Fri.Jam.Mue.2}]. It is known that the rigidity of a shell $\Omega$ is closely related to the optimal Korn's constant in the nonlinear (in some cases linear) first Korn's inequality-a geometric rigidity estimate for $\Bu\in H^1(\Omega)$ fields [\ref{bib:Korn.1},\ref{bib:Korn.2},\ref{bib:Friedrichs},\ref{bib:Kohn},\ref{bib:Horgan},\ref{bib:Fri.Jam.Mue.1},\ref{bib:Fri.Jam.Mue.2}]. Depending on the problem, the field $\Bu\in H^1$ may or may not satisfy boundary conditions, e.g., [\ref{bib:Fri.Jam.Mue.1},\ref{bib:Gra.Tru.},\ref{bib:Gra.Har.2}]. It is also known that the critical buckling load of a shell under compression is again closely related to Korn's constant\footnote{The optimal constant in Korn's first inequality} in Korn's first inequality [\ref{bib:Gra.Tru.},\ref{bib:Gra.Har.2}], thus finding the optimal constants in Korn's inequalities is a central task in problems concerning thin domains in general. The geometric rigidity estimate in [\ref{bib:Fri.Jam.Mue.1}] reads as follows: \textit{Assume $\Omega\subset\mathbb R^3$ is open bounded connected and Lipschitz. Then there exists a constant $C_I=C_I(\Omega),$ such that for every vector field $\Bu\in H^1(\Omega),$ there exists a constant rotation $\BR\in SO(3)$, such that}
\begin{equation}
\label{1.2}
\|\nabla\Bu-\BR\|_{L^2(\Omega)}^2\leq C_{I}\int_\Omega\mathrm{dist}^2(\nabla\Bu(x),SO(3))dx.
\end{equation}
It is known that if $\Omega$ is a thin domain, then the constant $C_I$ in (\ref{1.2}) blows up as the the thickness of the domain goes to zero, in particular one has $C_I=\frac{C}{h^2}$ in the case of plates [\ref{bib:Fri.Jam.Mue.2}], see also the book of Tovstik and Smirnov [\ref{bib:Tov.Smi.}] on buckling of shells, where they construct various Ans\"atze for different shells giving the constant $C_I$ tending to infinity as $h$ goes to zero. Also, the estimate (\ref{1.2}) for plates is the cornerstone in the derivation of nonlinear plate theories [\ref{bib:Fri.Jam.Mue.2}] and shell theories in the case of low elastic energy [\ref{bib:Fri.Jam.Mor.Mue.}]. Let us point out, that at present there is no \textbf{nonlinear shell theory} derived from nonlinear three dimensional elasticity by $\Gamma-$convergence similar to the plate theory in [\ref{bib:Fri.Jam.Mue.2}], but there are works by different authors and groups in different directions for different situations [\ref{bib:Ciarlet3},\ref{bib:Ciarlet4},\ref{bib:Fri.Jam.Mor.Mue.},\ref{bib:LeD.Rao.1},\ref{bib:LeD.Rao.2},\ref{bib:Cia.Rab.},\ref{bib:Hor.Lew.Pak.},\ref{bib:Hor.Vel.},\ref{bib:Bla.Gri.1},\ref{bib:Bla.Gri.2},\ref{bib:Lew.Mor.Pak.},\ref{bib:Lew.Pak.},\ref{bib:Lew.Mah.Pak.},\ref{bib:Bel.Koh.1},\ref{bib:Bel.Koh.2},\ref{bib:Bel.Koh.3},\ref{bib:Bella},\ref{bib:Koh.Obr.}], some of them using already existing shell theories for certain situations. The main reason for the missing nonlinear shell theory is that the optimal constant asymptotics in the shell thickness $h$ in the estimate (\ref{1.2}) is an open problem for shells in general. Meanwhile, the lower bound (\ref{1.2}) with $C_I=Ch^{-2}$ is universal in the sense that it holds for shells too as shown in [\ref{bib:Fri.Jam.Mor.Mue.}], but it is not optimal if the shell has non-vanishing curvatures as shown in [\ref{bib:Gra.Har.3},\ref{bib:Harutyunyan.2}]. The linearization of (\ref{1.2}) around the identity matrix is Korn's first inequality [\ref{bib:Korn.1},\ref{bib:Korn.2},\ref{bib:Friedrichs},\ref{bib:Kon.Ole.1},\ref{bib:Kon.Ole.2},\ref{bib:Ciarlet1},\ref{bib:Cia.Mar.}] without boundary conditions and reads as follows:
\textit{Assume $\Omega\subset\mathbb R^n$ is open bounded connected and Lipschitz. Then there exists a constant $C_{II}=C(\Omega),$ depending only on $\Omega,$ such that for every vector field $\Bu\in H^1(\Omega)$ there exists a skew-symmetric matrix $\BA\in \mathbb R^{n\times n,}$ i.e., $A+A^T=0,$ such that
\begin{equation}
\label{1.3}
\|\nabla\Bu-\BA\|_{L^2(\Omega)}^2\leq C_{II}\|e(\Bu)\|_{L^2(\Omega)}^2,
\end{equation}
where $e(\Bu)=\frac{1}{2}(\nabla\Bu+\nabla\Bu^T)$ is the symmetrized gradient (the strain in linear elasticity).} There is also Korn's second inequality [\ref{bib:Korn.1},\ref{bib:Korn.2},\ref{bib:Kon.Ole.2}], which imposes no boundary or normalization condition on the vector field $\Bu\in H^1(\Omega)$ and reads a follows: \textit{Assume $\Omega\subset\mathbb R^n$ is open bounded connected and Lipschitz. Then there exists a constant $C=C(\Omega),$ depending only on $\Omega,$ such that for every vector field $\Bu\in H^1(\Omega)$ there holds:}
\begin{equation}
\label{1.4}
\|\nabla\Bu\|_{L^2(\Omega)}^2\leq C(\|\Bu\|_{L^2(\Omega)}^2+\|e(\Bu)\|_{L^2(\Omega)}^2).
\end{equation}
We refer to the recent survey by M\"uller [\ref{bib:Mueller.}] for more details on Korn and geometric rigidity estimates as well as plate and shell theories. It is natural to expect that the constants $C_I$ and $C_{II}$ have the same asymptotics in $h$, which is open for thin domains is general, however, in the meantime, the estimate $C_{I}\geq C_{II}$ for instance is quite straightforward. If $\Omega$ is a plate then the optimal constants in (\ref{1.2}) and (\ref{1.3}) scale like $h^{-2}$ [\ref{bib:Fri.Jam.Mue.2},\ref{bib:Gra.Har.3}]. When $\Omega\subset\mathbb R^3$ is a shell and the vector field $\Bu\in H^1(\Omega)$ satisfies zero or periodic Dirichlet boundary conditions on the thin face of the shell, then the following is known: In the case when one of the two principal curvatures of $S$ vanishes on the entire mid-surface $S$ and the other one has a constant sign, i.e., never vanishes, then the optimal constant $C_{II}$ in (\ref{1.3}) scales like $h^{-3/2}$ [\ref{bib:Gra.Har.3}]. If $S$ has a nonzero Gaussian curvature, then $C_{II}$ scales like $h^{-4/3}$ for the negative curvature and like $h^{-1}$ for the positive curvature [\ref{bib:Harutyunyan.2}]. Our program departed from [\ref{bib:Harutyunyan.1},\ref{bib:Gra.Har.3}] is to study the inequalities $(\ref{1.2})$ and (\ref{1.3}) for the remaining cases when the asymptotics of $C_{II}$ in $(\ref{1.3})$ has been established  [\ref{bib:Gra.Har.3},\ref{bib:Harutyunyan.2}]. In the present work we prove Korn's interpolation and second inequalities that hold and are sharp for practically all thin spatial domains $\Omega$ and all displacements $\Bu\in H^1(\Omega).$ The constants in the estimate are optimal and have the form $C$ or $Ch^{-1}.$ We also point out some immediate applications of the Korn interpolation inequality, in particular the estimates in [\ref{bib:Harutyunyan.1},\ref{bib:Gra.Har.3}] lead to new simplified sharp estimates for nonzero Gaussian curvature shells. The new interpolation estimate looks classical and it actually reduces the problem of proving (\ref{1.3}) to proving a Poincar\'e like inequality on the vector field $\Bu$ with $e(\Bu)$ in place of $\nabla\Bu.$

\section{Definitions and notation}
\setcounter{equation}{0}
\label{sec:2}

We introduce here the main notation and definitions. As in the case of shells with constant thickness, we will still call $S$ the mid-surface of the thin domain $\Omega.$ We will assume throughout this work that $S$ is connected, compact, regular and of class $C^3$ up to its boundary. We also assume that $S$ has a finite atlas of patches $S\subset\cup_{i=1}^k\Sigma_i$ such that each  patch $\Sigma_i$ can be parametrized by the principal variables $z$ and $\Gth$ ($z=$constant and $\Gth=$constant are the principal lines on $\Sigma_i$) that change in the ranges $z\in [z_i^1(\Gth),z_i^2(\Gth)]$ for $\Gth\in [0,\omega_i],$ where $\omega_i>0$ for $i=1,2,\dots,k.$ Moreover, the functions $z_i^1(\Gth)$ and $z_i^2(\Gth)$ satisfy the conditions
\begin{align}
\label{2.1}
&\min_{1\leq i\leq k}\inf_{\Gth\in [0,\omega_i]}[z_i^2(\Gth)-z_i^1(\Gth)]=l>0,\quad\max_{1\leq i\leq k}\sup_{\Gth\in [0,\omega_i]}[z_i^2(\Gth)-z_i^1(\Gth)]=L<\infty,\\ \nonumber
&\max_{1\leq i\leq k}\left(\|z_i^1\|_{W^{1,\infty}[0,\omega_i]}+\|z_i^2\|_{W^{1,\infty}[0,\omega_i]}\right)=Z<\infty.
\end{align}
Since there will be no condition imposed on the vector field $\Bu\in H^1(\Omega),$ (see Theorem~\ref{th:3.1}), we can restrict ourselves to a single patch $\Sigma_i\subset S$ and denote it by $S$ for simplicity. Let the mid-surface $S$ be given by the parametrization $\Br=\Br(\Gth,z)$ in the principal variables. Then denoting the normal coordinate by $t$ we obtain the set of local orthogonal curvilinear coordinates $(t,\Gth,z)$ on the entire domain $\Omega$ given by
$\BR(t,\Gth,z)=\Br(z,\Gth)+t\Bn(z,\Gth),$ where $\Bn$ is the unit normal to $S$ and $t\in [-g_1^h,g_2^h].$ 
Let
\[
A_{z}=\left|\frac{\partial \Br}{\partial z}\right|\quad \text{and}\quad A_{\Gth}=\left|\frac{\partial \Br}{\partial\Gth}\right|
\]
be the two nonzero components of the metric tensor of the mid-surface and let $\Gk_{z}$ and $\Gk_{\Gth}$ be the two principal curvatures.
In what follows we will use the notation $f_{,\alpha}$ for the partial derivative $\frac{\partial}{\partial\alpha}$ inside the gradient matrix of a vector field $\Bu\colon\Omega\to\mathbb R^3.$ For the partial derivatives in the gradient of vector fields $\BU=(u,v)\colon E\to\mathbb R^2,$ i.e., the two dimensional ones, we will use the simplified notation $u_\alpha,$ where $E\subset\mathbb R^2$ is any open subset of $\mathbb R^2.$  The gradient of a vector field $\Bu=(u_t,u_\Gth,u_z)\in H^1(\Omega,\mathbb R^3)$ on the entire set $\Omega$ is given by the formula
\begin{equation}
\label{2.2}
\nabla\Bu=
\begin{bmatrix}
  u_{t,t} & \dfrac{u_{t,\Gth}-A_{\Gth}\Gk_{\Gth}u_{\Gth}}{A_{\Gth}(1+t\Gk_{\Gth})} &
\dfrac{u_{t,z}-A_{z}\Gk_{z}u_{z}}{A_{z}(1+t\Gk_{z})}\\[3ex]
u_{\Gth,t}  &
\dfrac{A_{z}u_{\Gth,\Gth}+A_{z}A_{\Gth}\Gk_{\Gth}u_{t}+A_{\Gth,z}u_{z}}{A_{z}A_{\Gth}(1+t\Gk_{\Gth})} &
\dfrac{A_{\Gth}u_{\Gth,z}-A_{z,\Gth}u_{z}}{A_{z}A_{\Gth}(1+t\Gk_{z})}\\[3ex]
u_{ z,t}  & \dfrac{A_{z}u_{z,\Gth}-A_{\Gth,z}u_{\Gth}}{A_{z}A_{\Gth}(1+t\Gk_{\Gth})} &
\dfrac{A_{\Gth}u_{z,z}+A_{z}A_{\Gth}\Gk_{z}u_{t}+A_{z,\Gth}u_{\Gth}}{A_{z}A_{\Gth}(1+t\Gk_{z})}
\end{bmatrix}
\end{equation}
in the orthonormal local basis $(\Bn,\Be_\Gth,\Be_z).$ The gradient restricted to the mid-surface or the so called simplified gradient denoted by $\BF$ is obtained from (\ref{2.2}) by putting $t=0,$
thus it has the form
\begin{equation}
\label{2.3}
\BF=
\begin{bmatrix}
  u_{t,t} & \dfrac{u_{t,\Gth}-A_{\Gth}\Gk_{\Gth}u_{\Gth}}{A_{\Gth}} &
\dfrac{u_{t,z}-A_{z}\Gk_{z}u_{z}}{A_{z}}\\[3ex]
u_{\Gth,t}  &
\dfrac{A_{z}u_{\Gth,\Gth}+A_{z}A_{\Gth}\Gk_{\Gth}u_{t}+A_{\Gth,z}u_{z}}{A_{z}A_{\Gth}} &
\dfrac{A_{\Gth}u_{\Gth,z}-A_{z,\Gth}u_{z}}{A_{z}A_{\Gth}}\\[3ex]
u_{ z,t}  & \dfrac{A_{z}u_{z,\Gth}-A_{\Gth,z}u_{\Gth}}{A_{z}A_{\Gth}} &
\dfrac{A_{\Gth}u_{z,z}+A_{z}A_{\Gth}\Gk_{z}u_{t}+A_{z,\Gth}u_{\Gth}}{A_{z}A_{\Gth}}
\end{bmatrix}.
\end{equation}
We will work with $\BF$ and then pass to $\nabla\Bu$ using their closeness to the order of $h$ due to the smallness of the variable $t.$ In this paper all norms $\|\cdot\|$ are $L^{2}$ norms and the Cartesian $L^2$ inner product of two functions
$f,g\colon\Omega\to\mathbb R$ will be given by
\[
(f,g)_{\Omega}=\int_{\Omega}A_zA_\Gth f(t,\Gth,z)g(t,\Gth,z)d\Gth dzdt,
\]
which gives rise to the norm $\|f\|_{L^2(\Omega)}$. The thin domain mid-surface parameters are the quantities $\omega,l,L,Z,a,A$ and $k$
that will be (some of them defined below) assumed to satisfy the below conditions
\begin{align}
\label{2.4}
&\omega,l,L,Z,a=\min_{D}(A_\Gth,A_z)>0, \quad A=\|A_\Gth\|_{W^{2,\infty}(D)}+\|A_z\|_{W^{2,\infty}(D)}<\infty,\\ \nonumber
&k=\|\Gk_\Gth\|_{W^{1,\infty}(D)}+\|\Gk_z\|_{W^{1,\infty}(D)}<\infty,
\end{align}
where $D=\{(\Gth,z)\ : \ \Gth\in [0,\omega], z\in[z^1(\Gth),z^2(\Gth)]\}.$ 

\section{Main results}
\label{sec:3}
\setcounter{equation}{0}

Let us point out that in what follows the constants $h_0>0$ and $C>0$ will depend only on the quantities $a,A,k,\omega,l,L,Z,c_1$ and $c_2$ i.e., the domain mid-surface and thickness parameters. 
\begin{theorem}[Korn's interpolation inequality]
\label{th:3.1}
Assume the conditions (\ref{2.1}) and (\ref{2.4}) hold. There exists constants $h_0,C>0,$ such that Korn's interpolation inequality holds:
\begin{equation}
  \label{3.1}
\|\nabla\Bu\|^2\leq C\left(\frac{\|\Bu\cdot\Bn\|\cdot\|e(\Bu)\|}{h}+\|\Bu\|^2+\|e(\Bu)\|^2\right),
\end{equation}
for all $h\in(0,h_0)$ and $\Bu=(u_t,u_\Gth,u_z)\in H^1(\Omega),$ where $\Bn$ is the unit normal to the mid-surface $S.$ Moreover, the exponent of $h$ in the inequality (\ref{3.1}) is optimal for any thin domain $\Omega$ satisfying (\ref{2.1}) and (\ref{2.4}), i.e., there exists a displacement $\Bu\in H^1(\Omega,\mathbb R^3)$ realizing the asymptotics of $h$ in (\ref{3.1}).
\end{theorem}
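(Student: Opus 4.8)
\medskip

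The plan is to work in the local principal coordinates $(t,\Gth,z)$ on a single patch and to exploit the explicit form of the gradient matrix (\ref{2.2}) and of the simplified gradient $\BF$ in (\ref{2.3}). The starting observation is that the six components of $e(\Bu)$ control directly the symmetric combinations of the entries of $\nabla\Bu$, and the only obstruction to bounding the full gradient is the antisymmetric part, i.e. the three ``rotation'' quantities that come from the off-diagonal pairs. Writing $\nabla\Bu = e(\Bu) + W$ with $W$ skew, one must estimate $\|W\|$; since $W$ is determined by three scalar fields, say $w_{t\Gth}, w_{tz}, w_{\Gth z}$, the task reduces to bounding the $L^2$ norms of these three fields in terms of $\|\Bu\|$, $\|e(\Bu)\|$, and the $h^{-1}\|\Bu\cdot\Bn\|\,\|e(\Bu)\|$ term. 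First I would pass from $\nabla\Bu$ to $\BF$ at the cost of a factor $1+Ch$ using (\ref{1.1}), (\ref{2.4}) and the smallness of $|t|\le c_1 h$, so that it suffices to prove the estimate with $\BF$ (and the strain $e_\BF$ built from $\BF$) in place of $\nabla\Bu$; this is the routine ``closeness to order $h$'' step flagged in Section~\ref{sec:2}.

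\medskip

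The core of the argument is a differential identity for the skew fields. Differentiating the expressions for the entries of $\BF$ and using that the principal-coordinate frame is orthonormal, one obtains relations of the schematic form $\partial_t w_{t\Gth} = (\text{entry of } e_\BF) + (\text{lower order in }\Gk, A) $ and similarly for $w_{tz}$, together with a ``compatibility'' relation among the tangential derivatives of the $w$'s and the curvatures times $u_t = \Bu\cdot\Bn$. Concretely: the $t$-derivatives of the two mixed rotations are, up to symmetric-gradient terms, equal to tangential derivatives of $u_{t,t}$-type quantities, which integrates to show $w_{t\Gth}$ and $w_{tz}$ are, up to $\|e(\Bu)\|$ and $h$-small errors, independent of $t$ to leading order; and the in-plane rotation $w_{\Gth z}$ is tied to $\Gk_\Gth, \Gk_z$ and $u_t$ through the Gauss--Codazzi-type structure hidden in (\ref{2.3}). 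Integrating in $t$ over the fibre of length between $h$ and $c_1h$ and using Cauchy--Schwarz on the fibre produces exactly the factor $h^{-1}$ multiplying a product $\|u_t\|\cdot\|e(\Bu)\|$ — this is the mechanism that generates the interpolation term. I would then close the estimate by a Poincar\'e inequality in the $t$ variable: for each skew field, control of $\partial_t$ of it by $\|e(\Bu)\| + Ch\|\nabla\Bu\|$, plus control of a single $t$-slice average by $h^{-1/2}(\|u_t\|^{1/2}\|e(\Bu)\|^{1/2} + \ldots)$, yields $\|W\|^2 \le C(h^{-1}\|u_t\|\|e(\Bu)\| + \|\Bu\|^2 + \|e(\Bu)\|^2) + Ch^2\|\nabla\Bu\|^2$, and absorbing the last term for $h<h_0$ finishes it.

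\medskip

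For optimality, I would exhibit a test field. The natural candidate is an infinitesimal-rotation-type field perturbed so that its strain is $O(h)$ but its gradient is $O(1)$: take $\Bu$ whose leading part is a rotation field $\BA\Bx$ restricted to the shell (so $\nabla\Bu = O(1)$, $e(\Bu)=0$ for the exact rotation) and then modify it on the scale of $t$ so that $u_t = \Bu\cdot\Bn$ is made small, of order $h^{\beta}$ for a suitable $\beta$, while $\|e(\Bu)\|$ becomes of order $h$; choosing the profile in $t$ to be linear or quadratic and matching powers so that $h^{-1}\|u_t\|\|e(\Bu)\| \sim \|\nabla\Bu\|^2 \sim 1$ shows the exponent $-1$ cannot be improved. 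One checks $\|\Bu\|^2$ and $\|e(\Bu)\|^2$ are then lower order, so the middle term of (\ref{3.1}) indeed dominates and forces the stated scaling; this construction only uses (\ref{2.1}) and (\ref{2.4}) and hence works for every admissible thin domain.

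\medskip

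\textbf{Main obstacle.} The hard part is bookkeeping the curvature and metric terms in the differential identities for the skew fields: one must show that all the ``error'' contributions from $\Gk_\Gth, \Gk_z, A_\Gth, A_z$ and their derivatives are genuinely absorbed into $C(\|\Bu\|^2 + \|e(\Bu)\|^2) + Ch^2\|\nabla\Bu\|^2$ and never produce a term of order $h^{-1}\|\nabla\Bu\|^2$ or worse, uniformly over the class (\ref{2.4}). This is where the $W^{2,\infty}$ control on $A_\Gth,A_z$ and $W^{1,\infty}$ control on the curvatures in (\ref{2.4}), together with the careful separation of ``which derivative falls on what,'' is essential; the interpolation structure $h^{-1}\|u_t\|\|e(\Bu)\|$ rather than a clean $h^{-2}\|e(\Bu)\|^2$ or $h^0$ bound is precisely what emerges once this accounting is done correctly on the thin fibre.
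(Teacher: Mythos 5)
Your outline stalls exactly at the point where the real work lies: the mechanism that produces the interpolation term $h^{-1}\|\Bu\cdot\Bn\|\,\|e(\Bu)\|$. The differential identities for the rotation fields have the schematic form $\partial_k w_{ij}=\partial_i e_{jk}-\partial_j e_{ik}$, i.e. they control derivatives of the skew part by \emph{derivatives} of the strain, not by the strain itself; integrating in $t$ along a fibre of length $\sim h$ and applying Cauchy--Schwarz does not remove the tangential derivatives $\partial_\Gth e_{tt}$, $\partial_z e_{tt}$, so your claims that $w_{t\Gth},w_{tz}$ are $t$-independent ``up to $\|e(\Bu)\|$-errors'' and that a $t$-slice average is bounded by $h^{-1/2}(\|u_t\|\,\|e(\Bu)\|)^{1/2}$ are precisely the statements that need proof, and no mechanism is offered for them. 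The paper obtains this term by a genuinely different device: on each two-dimensional cross-section it splits the normal component into its harmonic part $\tilde u$ plus a remainder, controls the remainder through the equation $\Delta(u-\tilde u)=$ (divergence of strain combinations) together with a thin-domain Poincar\'e inequality (estimates (5.6)--(5.8)), and then applies a nontrivial interpolation estimate for harmonic functions in 2D thin domains, $\|w_y\|^2\le C\bigl(h^{-1}\|w\|\,\|w_x\|+\|w\|^2+\|w_x\|^2\bigr)$ (Lemmas 4.1--4.5), whose proof requires a thin-domain Korn first inequality with constant $C/h$ via Kohn--Vogelius localization, an optimized cutoff argument, and interior derivative estimates for harmonic functions. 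Nothing in your fibre-wise sketch substitutes for this. Moreover, the in-plane rotation $w_{\Gth z}$ cannot be handled by integrating in $t$ at all: the paper treats that block separately by a classical 2D Korn second inequality on tangential cross-sections of $\Omega^h$ followed by an extension in the normal direction via Lemma 5.2 (estimates (5.16)--(5.31)); your appeal to ``Gauss--Codazzi-type structure'' is not an argument. Incidentally, the hard part is not the curvature bookkeeping you flag — in the paper those terms are absorbed painlessly as the perturbations $\BGa,\BGb,w$ in Lemma 5.1 — but the harmonic-function estimate itself.

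The optimality construction is also not viable as proposed. A rigid-rotation field restricted to the shell, however you taper its normal component, satisfies $\|\nabla\Bu\|^2\sim\|\Bu\|^2$, so the inequality is verified by the trivial term $\|\Bu\|^2$ alone and the exponent of $h$ in the middle term is never tested. To saturate (\ref{3.1}) one needs $\|\nabla\Bu\|^2\gg\|\Bu\|^2+\|e(\Bu)\|^2$, which forces rapid tangential oscillation at the intermediate scale $\sqrt h$: the paper's Kirchhoff-type Ansatz (6.1), $u_t=W(\Gth/\sqrt h,z)$ with $u_\Gth,u_z$ chosen to cancel the leading shear, gives $\|\nabla\Bu\|^2\sim 1\sim h^{-1}\|u_t\|\,\|e(\Bu)\|$ while $\|\Bu\|^2+\|e(\Bu)\|^2\sim h$, which is what actually proves sharpness.
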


The next theorem provides a sharp Korn's second inequality for thin domains.

\begin{theorem}[Korn's second inequality]
\label{th:3.2}
Assume the conditions (\ref{2.1}) and (\ref{2.4}) hold. There exists constants $h_0,C>0,$ such that Korn's second inequality holds:
\begin{equation}
  \label{3.2}
\|\nabla\Bu\|^2\leq \frac{C}{h}(\|\Bu\|^2+\|e(\Bu)\|^2),
\end{equation}
for all $h\in(0,h_0)$ and $\Bu=(u_t,u_\Gth,u_z)\in H^1(\Omega).$ Moreover, the exponent of $h$ in the inequality (\ref{3.2}) is optimal for any thin spatial domain $\Omega$ satisfying (\ref{2.1}) and (\ref{2.4}), i.e., there exists a displacement $\Bu\in H^1(\Omega,\mathbb R^3)$ realizing the asymptotics of $h$ in (\ref{3.2}).
\end{theorem}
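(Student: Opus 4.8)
The plan is to deduce Theorem~\ref{th:3.2} directly from Theorem~\ref{th:3.1}, since the Korn interpolation inequality (\ref{3.1}) is strictly stronger; the only task is to absorb the mixed term $\|\Bu\cdot\Bn\|\cdot\|e(\Bu)\|/h$ into $\frac{C}{h}(\|\Bu\|^2+\|e(\Bu)\|^2)$. First I would bound $\|\Bu\cdot\Bn\|\leq\|\Bu\|$ pointwise, since $\Bn$ is a unit vector, so the mixed term is at most $\|\Bu\|\cdot\|e(\Bu)\|/h$. Then I would apply Young's inequality in the form $\|\Bu\|\cdot\|e(\Bu)\|\leq\frac12(\|\Bu\|^2+\|e(\Bu)\|^2)$, which gives
\begin{equation*}
\frac{\|\Bu\cdot\Bn\|\cdot\|e(\Bu)\|}{h}\leq\frac{1}{2h}\left(\|\Bu\|^2+\|e(\Bu)\|^2\right).
\end{equation*}
Substituting this into (\ref{3.1}) and noting that for $h<h_0\leq 1$ one has $\|\Bu\|^2+\|e(\Bu)\|^2\leq\frac{1}{h}(\|\Bu\|^2+\|e(\Bu)\|^2)$, all three terms on the right of (\ref{3.1}) are dominated by a constant multiple of $\frac{1}{h}(\|\Bu\|^2+\|e(\Bu)\|^2)$, yielding (\ref{3.2}) with a new constant $C$ (say $3C/2$, after possibly shrinking $h_0$ to be at most $1$).

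For the optimality claim, I would exhibit a single test displacement realizing the $h^{-1}$ scaling. The natural candidate is a field that is ``almost rigid'' in a way that makes $e(\Bu)$ one order smaller in $h$ than $\nabla\Bu$ while $\|\Bu\|$ is comparable to $\|\nabla\Bu\|$ up to the thin-domain geometry. Concretely, one can take $\Bu$ to be (a smooth truncation near the mid-surface of) an infinitesimal bending-type ansatz, or more simply borrow the extremal displacement already constructed for the optimality part of Theorem~\ref{th:3.1}: that same $\Bu$ satisfies $\|\nabla\Bu\|^2\sim\frac{1}{h}\|\Bu\cdot\Bn\|\cdot\|e(\Bu)\|$ with $\|\Bu\cdot\Bn\|\sim\|\Bu\|$ and $\|e(\Bu)\|\lesssim\|\Bu\|$, so that $\|\nabla\Bu\|^2\gtrsim\frac{1}{h}\|\Bu\|^2$, which already forces the constant in (\ref{3.2}) to be at least of order $h^{-1}$. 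I would check that the extremal field from Theorem~\ref{th:3.1} indeed has $\|e(\Bu)\|$ not larger than $\|\Bu\|$ in order; if not, a minor rescaling of its components (scaling the normal component differently from the tangential ones) fixes the balance without changing the orders.

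The main obstacle is essentially cosmetic rather than mathematical: one must be slightly careful that the constant and threshold $h_0$ produced depend only on the admissible list $a,A,k,\omega,l,L,Z,c_1,c_2$, which is automatic here because we only invoke Theorem~\ref{th:3.1} (whose constants already have this dependence) plus the elementary inequalities above, which introduce no new geometric dependence. The only genuine verification needed is the optimality witness: confirming that the extremal displacement satisfies the required two-sided bounds $\|e(\Bu)\|\lesssim\|\Bu\|\lesssim\|\nabla\Bu\|$ with the stated $h$-powers. I expect this to follow from the explicit form of the test field via the gradient formula (\ref{2.2}) and a direct computation, estimating each entry of $\nabla\Bu$ and of $e(\Bu)$ over $\Omega$ using the bounds (\ref{1.1}) and (\ref{2.4}) on $A_z,A_\Gth,\Gk_z,\Gk_\Gth,g_1^h,g_2^h$.
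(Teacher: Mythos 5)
Your deduction of (\ref{3.2}) from (\ref{3.1}) is exactly the intended route: the paper offers no separate argument for Theorem~\ref{th:3.2}, treating it as an immediate consequence of the interpolation inequality (bound $\|\Bu\cdot\Bn\|\leq\|\Bu\|$, apply Young, absorb the lower-order terms for $h\leq 1$), and for the sharpness claim it reuses the single Ansatz (\ref{6.1}) from the end of Section~5, explicitly omitting the verification. So your first paragraph matches the paper, and it introduces no dependence of $C,h_0$ beyond that already present in Theorem~\ref{th:3.1}.

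The one step that does not go through as written is your optimality shortcut. From $\|\nabla\Bu\|^2\sim\frac{1}{h}\|\Bu\cdot\Bn\|\,\|e(\Bu)\|$ together with $\|\Bu\cdot\Bn\|\sim\|\Bu\|$ and $\|e(\Bu)\|\lesssim\|\Bu\|$ you can only conclude the upper bound $\|\nabla\Bu\|^2\lesssim\frac{1}{h}\|\Bu\|^2$, whereas sharpness of (\ref{3.2}) needs the lower bound $\|\nabla\Bu\|^2\gtrsim\frac{1}{h}\left(\|\Bu\|^2+\|e(\Bu)\|^2\right)$, which additionally requires the reverse comparison $\|e(\Bu)\|\gtrsim\|\Bu\|$, i.e.\ genuine comparability $\|e(\Bu)\|\sim\|\Bu\|$; your later ``two-sided bounds'' $\|e(\Bu)\|\lesssim\|\Bu\|\lesssim\|\nabla\Bu\|$ are again only the directions that are not at issue. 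The cure is precisely the direct computation you defer to, and it works: for (\ref{6.1}) one finds $\|\Bu\cdot\Bn\|\sim\|\Bu\|\sim h^{3/4}$; the $(2,2)$ entry of (\ref{2.2}) contains the $t$-linear bending contribution coming from $u_{\Gth,\Gth}$, of amplitude $\sim t/h$, which cannot cancel against the $t$-independent contribution $A_zA_\Gth\Gk_\Gth u_t$ on the fibers in $t$, so $\|e(\Bu)\|\sim h^{3/4}$ as well; and the entries $F_{12},F_{21}\sim h^{-1/2}$ on a set of measure $\sim h^{3/2}$ give $\|\nabla\Bu\|\sim h^{1/4}$. Hence $\|\nabla\Bu\|^2\big/\left(\|\Bu\|^2+\|e(\Bu)\|^2\right)\sim h^{-1}$, which is the statement to be proved; carrying out this computation (which the paper also omits, citing its earlier construction) is the only substantive work remaining in your plan, and no rescaling of the components is needed.
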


Some remarks are in order. Some little manipulations of the estimate (\ref{3.1}) have been proven in [\ref{bib:Gra.Har.3}] for shells with zero Gaussian curvature and in [\ref{bib:Harutyunyan.2}] for shells with nonzero Gaussian curvature. But in both works the mid-surface $S$ was required to be a single patch and the vector field $\Bu$ was required to satisfy zero (or partially periodic) boundary conditions on the thin face of $\Omega.$ The strength of (\ref{3.1}) and (\ref{3.2})
is that they hold for practically all shells $\Omega$ and absolutely all vector fields $\Bu\in H^1(\Omega).$ 

Another very important remark is as follows: it is already clear from the works [\ref{bib:Gra.Har.3},\ref{bib:Harutyunyan.2}] that when trying to prove (\ref{1.3}) or (\ref{1.2}) for shells, one needs to consider the cases 1) $\Gk_\Gth=0$ and $|\Gk_z|>0,$ 2) $\Gk_\Gth\Gk_z>0,$ 3) $\Gk_\Gth\Gk_z<0.$ While the Ansatz given at the end of Section~5 realizes the asymptotics of both (\ref{3.1}) and (\ref{3.2}), it works only in the case 2) for the purpose of the estimate (\ref{1.3}). In the cases 1) and 2) the Ans\"atze constructed in [\ref{bib:Gra.Har.3}] and [\ref{bib:Harutyunyan.2}] do work for the purpose of (\ref{1.3}), which also work for (\ref{3.1}). This being said, while Korn's second inequality is classical, the interpolation inequality (\ref{3.1}) seems to be the "best" asymptotic Korn second-like inequality holding true and being sharp for all main shell-curvature situations.

\section{The key lemma}
\label{sec:4}
\setcounter{equation}{0}
In this section we prove a gradient separation estimate for harmonic functions in two dimensional thin domains.
We start with the following rigidity estimate:
\begin{lemma}
\label{lem:4.1}
Let $h\in (0,1)$ and let the Lipschitz functions $\varphi_1,\varphi_2\colon[0,1]\to (0,\infty)$ and the constants $C_1,C_2>0$ be such that 
\begin{equation}
\label{4.1}
h\leq \varphi_i(y)\leq C_1h,\quad |\nabla\varphi_i(y)|\leq C_2h \quad\text{for all}\quad y\in [0,1], \ \ i=1,2.
\end{equation}
Denote the thin domain $D=\{(x,y)\in\mathbb R^2 \ : \ y\in (0,1), x\in (-\varphi_1(y),\varphi_2(y))\}$ that has a thickness of order $h.$ Then there exists a constant $c>0,$ depending only on $C_1$ and $C_2,$ such that any harmonic function $w\in C^2(D)$ fulfills the inequality
\begin{equation}
\label{4.2}
\|w_y-a\|_{L^2(D)}\leq\frac{c}{h}\|w_x\|_{L^2(D)},
\end{equation}
where $a=\frac{1}{|D|}\int_D w_y$ is the average of $w_y$ over $D.$
\end{lemma}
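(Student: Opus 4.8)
The plan is to exploit harmonicity to convert the thin-domain geometry into a one-dimensional problem in the thin variable $x$, and then use the near-flatness of $D$ encoded in \eqref{4.1}. The key observation is that $w_{xx} = -w_{yy}$, so $w_{xx}$ is controlled (in $L^2$) by the $y$-derivatives of $w$; combined with a Poincar\'e-type inequality in the thin direction, this ties $w_y$ (up to its average) to $w_x$. Concretely, first I would fix $y\in(0,1)$ and work on the fiber $I_y = (-\varphi_1(y),\varphi_2(y))$, which has length between $h$ and $(C_1+1)h$. On each fiber, $w_x(\cdot,y)$ has an antiderivative, and since $\|w_{xx}(\cdot,y)\|_{L^2(I_y)} = \|w_{yy}(\cdot,y)\|_{L^2(I_y)}$, one can estimate the oscillation of $w_x$ along the fiber by $Ch^{1/2}\|w_{yy}(\cdot,y)\|_{L^2(I_y)}$, so that $w_x$ is close on the fiber to a function of $y$ alone, call it $b(y)$, with $\|w_x(\cdot,y)-b(y)\|_{L^2(I_y)}\le Ch\,\|w_{yy}(\cdot,y)\|_{L^2(I_y)}$.

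Next I would integrate the identity $w_{xy} = \partial_x w_y$ in $x$: this says $w_y(x,y) - w_y(x_0,y)$ is the $x$-integral of $w_{xy}$, and again a Cauchy--Schwarz on the fiber controls $\|w_y(\cdot,y) - \langle w_y\rangle_{I_y}\|_{L^2(I_y)}$ by $Ch\,\|w_{xy}(\cdot,y)\|_{L^2(I_y)}$, where $\langle\cdot\rangle_{I_y}$ denotes the fiber average. Here the crucial point is that $w_{xy} = \partial_y w_x$, and using the previous step $w_x$ is within $Ch\|w_{yy}\|$ of the $x$-independent function $b(y)$; differentiating in $y$ needs care because the fiber endpoints $\varphi_i$ depend on $y$, but the bound $|\nabla\varphi_i|\le C_2 h$ means the boundary terms from differentiating under the integral sign are lower order. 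Integrating the resulting fiber estimates in $y$ over $(0,1)$ and using Fubini gives $\|w_y - \beta(y)\|_{L^2(D)} \le \frac{C}{h}\|w_x\|_{L^2(D)}$ for a suitable one-variable function $\beta$ (essentially $\beta(y) = b'(y)$ up to corrections), after absorbing the extra $h$-powers; the gain of $1/h$ over a naive estimate comes precisely from the $h^2$ in $|D|\sim h$ versus the pointwise fiber Poincar\'e constant.

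Finally, replacing the $y$-dependent function $\beta(y)$ by the single constant $a = \frac{1}{|D|}\int_D w_y$ requires a one-dimensional Poincar\'e inequality for $\beta$ on $(0,1)$, which in turn needs a bound on $\beta'$; this is where harmonicity is used a second time, since $\beta' \approx \langle w_{yy}\rangle = -\langle w_{xx}\rangle$, and $\langle w_{xx}\rangle$ over a fiber is a pure boundary term $\frac{1}{|I_y|}[w_x]_{\partial I_y}$, again controlled by $w_x$ and $\nabla\varphi_i$.

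\textbf{Main obstacle.} I expect the technical heart — and the place where the constant's dependence on $C_1, C_2$ (and independence of $h$) must be tracked carefully — to be the differentiation-under-the-integral steps over fibers with moving endpoints: every time one differentiates a fiber integral in $y$, boundary contributions involving $\varphi_i'(y)$ appear, and one must verify these are genuinely lower order (they carry an extra factor $h$ from \eqref{4.1}) rather than spoiling the $1/h$ scaling. Organizing the bookkeeping so that the powers of $h$ add up correctly, and ensuring the function $\beta$ (or $b$) is well-defined and $H^1$ in $y$ so that the final one-dimensional Poincar\'e step is legitimate, is the part that needs the most care.
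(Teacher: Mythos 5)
Your plan has a genuine gap, and it is not (as you suggest) the bookkeeping of powers of $h$: the problem is that the quantities your fiberwise steps multiply by those powers of $h$ are not controlled by the right-hand side at all. Your first two steps bound fiber oscillations by $\|w_{yy}(\cdot,y)\|_{L^2(I_y)}$ and $\|w_{xy}(\cdot,y)\|_{L^2(I_y)}$ over the \emph{full} fiber, i.e.\ by second derivatives of $w$ up to the lateral boundary. For a harmonic $w$ one only has interior, distance-weighted bounds of Caccioppoli type (this is exactly Lemma~\ref{lem:4.4} in the paper, $\|\delta\nabla w_x\|\leq 2\|\nabla w_x\|$, with the weight $\delta=\mathrm{dist}(\cdot,\partial D)$ degenerating at the boundary); the unweighted norms up to $\partial D$ can be arbitrarily large compared with $\|w_x\|_{L^2(D)}$, since the lemma assumes no boundary regularity whatsoever ($w\in C^2(D)$ is interior smoothness, $D$ is merely Lipschitz). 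The same defect is fatal in your last step: $\langle w_{xx}\rangle_{I_y}=\frac{1}{|I_y|}\bigl(w_x(\varphi_2(y),y)-w_x(-\varphi_1(y),y)\bigr)$, and likewise the endpoint terms $\varphi_i'(y)\,w_y(\pm\varphi_i(y),y)$ produced by differentiating fiber integrals with moving endpoints, are pointwise boundary traces of first derivatives; a harmonic function with $w_x\in L^2(D)$ need not have $L^2$ (or even well-defined pointwise) boundary values of $w_x$, so the factor $|\varphi_i'|\leq C_2h$ cannot rescue these terms. An interior version of your scheme can be made to work only away from a boundary layer of width $\sim h$, and then one must propagate the estimate to the boundary by weighted one-dimensional inequalities; that is essentially what the paper is forced to do later, in Lemma~\ref{lem:4.2} (using Lemmas~\ref{lem:4.3} and \ref{lem:4.4}), and even there the conclusion is weaker, with $\|w\|$ appearing on the right.

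The paper's proof of Lemma~\ref{lem:4.1} avoids second derivatives and traces entirely. Step~1 establishes Korn's first inequality on the thin domain $D$ with constant $C/h$, by the Kohn--Vogelius localization: cover $D$ by overlapping subdomains of height $\sim h$, apply the standard Korn inequality on each rescaled order-one piece, and chain the skew-symmetric matrices through the overlaps, which is where the factor $1/h$ arises. Step~2 applies this to the specific field $\BU=(u,v)$ with $u=w$ and $v(x,y)=-\int_0^x w_y(t,y)\,dt+\int_0^y w_x(0,z)\,dz$; harmonicity gives
\begin{equation*}
\nabla\BU=\begin{bmatrix} w_x & w_y\\ -w_y & w_x\end{bmatrix},\qquad e(\BU)=\begin{bmatrix} w_x & 0\\ 0 & w_x\end{bmatrix},
\end{equation*}
so (\ref{4.3}) yields $\|w_y-a_{12}\|_{L^2(D)}\leq \frac{C}{h}\|w_x\|_{L^2(D)}$ for the off-diagonal entry $a_{12}$ of the skew matrix, and replacing $a_{12}$ by the minimizing average $a$ gives (\ref{4.2}). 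If you wish to salvage your route, you would need to restrict all second-derivative and trace arguments to the region where $\delta\gtrsim h$ and then extend across the boundary layer with a weighted inequality such as (\ref{4.26}); as written, your argument does not prove the lemma.
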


\begin{proof}
The proof is derived from Korn's first inequality on $D$ for an appropriately chosen vector field $\BU\in H^1(D).$ It is divided into two steps.\\ 
\textbf{Step 1. Korn's first inequality on $D.$} \textit{There exists a constant $C>0,$ depending only on $C_1$ and $C_2$ such that for any vector field $\BU=(u,v)\colon D\to\mathbb R^2,$ there exists a skew-symmetric matrix $A,$ such that}
\begin{equation}
\label{4.3}
\|\nabla\BU-\BA\|_{L^2(D)}\leq \frac{C}{h}\|e(\BU)\|_{L^2(D)}.
\end{equation}
In the proof of (\ref{4.3}) the constant $C>0$ may depend only on $C_1$ and $C_2.$ We adopt the localization argument of Kohn and Vogelius  [\ref{bib:Koh.Vog.}] that was also successfully employed in [\ref{bib:Fri.Jam.Mue.1},\ref{bib:Fri.Jam.Mue.2}]. Denote the positive whole number 
$N=\left[\frac{1}{h}\right]+1$ where $[z]$ is the whole part of the real number $z\in\mathbb R.$ Consider the domains $D_k=\left\{(x,y)\in\mathbb R^2 \ : \ y\in \left(\frac{k-1}{N},\frac{k+1}{N}\right), x\in (-\varphi_1(y),\varphi_2(y))\right\},$ for $k=1,2,\dots,N-1.$ The obvious estimates $\frac{1}{h}<N<\frac{2}{h}$ together with (\ref{4.1}) ensure that each of the rescaled domains $\frac{1}{h}\cdot D_k$ is of order one with a piecewise Lipschitz boundary. Thus Korn's first inequality\footnote{Although it is not stated in [\ref{bib:Kon.Ole.2}], it can be easily shown that the constant $C$ in Theorem~6 depends only on $C_1$ and $C_2$ for our situation.} [\ref{bib:Kon.Ole.2}, Theorem~6] provides the existence of the sequence of skew-symmetric matrices 
$\BA_1,\BA_2,\dots,\BA_{N-1}$ such that we have for the rescaled fields $\tilde\BU(x,y)=\BU\left(\frac{x}{h},\frac{y}{h}\right),$
\begin{equation}
\label{4.4}
\|\nabla\tilde \BU-\BA_k\|_{L^2(\frac{1}{h}\cdot D_k)}\leq C\|e(\tilde \BU)\|_{L^2(\frac{1}{h}\cdot D_k)},\quad k=1,2,\dots,N-1.
\end{equation}
If we undo the rescaling in (\ref{4.4}) we get the same inequalities for the original field $\BU$ in $D_k:$
\begin{equation}
\label{4.5}
\|\nabla \BU-\BA_k\|_{L^2(D_k)}\leq C\|e(\BU)\|_{L^2(D_k)},\quad k=1,2,\dots,N-1.
\end{equation}
The rest is standard by the estimation of the norm $\|\BA_i-\BA_j\|.$ Assume $N>2$ otherwise we are done. For $1\leq k \leq N-2$ we have by the triangle inequality and by (\ref{4.5}) the estimate 
\begin{align*}
\|\BA_k-\BA_{k+1}\|_{L^2(D_k\cap D_{k+1})}&\leq \|\nabla \BU-\BA_k\|_{L^2(D_k)} +\|\nabla \BU-\BA_{k+1}\|_{L^2(D_{k+1})}\\
&\leq C\|e(\BU)\|_{L^2(D_k)}+C\|e(\BU)\|_{L^2(D_{k+1})},
\end{align*}
thus taking into account the fact that the measure of $D_k\cap D_{k+1}$ is of order $h^2$ we get from the last inequality the estimate 
\begin{equation}
\label{4.6}
|\BA_k-\BA_{k+1}|\leq \frac{C}{h}(\|e(\BU)\|_{L^2(D_k)}+\|e(\BU)\|_{L^2(D_{k+1})}), \quad k=1,2,\dots,N-2.
\end{equation}
Now an application of (\ref{4.6}) and the triangle inequality implies for any index $1\leq k\leq N-1$ the estimate 
\begin{align}
\label{4.7}
\|\BA_1-\BA_k\|_{L^2(D_k)} & \leq Ch |\BA_1-\BA_k| \\ \nonumber
& \leq Ch\sum_{i=1}^{k-1}|\BA_{i}-\BA_{i+1}| \\ \nonumber
& \leq C \sum_{i=1}^{k-1} (\|e(\BU)\|_{L^2(D_i)}+\|e(\BU)\|_{L^2(D_{i+1})}) \\ \nonumber
& \leq 2C\|e(\BU)\|_{L^2(D)}.
\end{align}
Therefore choosing $\BA=\BA_1,$ we get taking into account the estimates (\ref{4.7}), (\ref{4.5}) and the triangle inequality,
\begin{align*}
\|\nabla\BU-\BA\|_{L^2(D)} & \leq \sum_{k=1}^{N-1} \left(\|\nabla\BU-\BA_k\|_{L^2(D_k)}+\|\BA-\BA_k\|_{L^2(D_k)}\right)\\
& \leq \sum_{i=1}^{k-1}C\left\|e(\BU)\|_{L^2(D_k)}+ e(\BU)\|_{L^2(D)}\right)\\
& \leq CN\|e(\BU)\|_{L^2(D)}\\
& \leq \frac{C}{h}\|e(\BU)\|_{L^2(D)},\\
\end{align*}
which is exactly (\ref{4.3}).\\
\textbf{Step 2. Proof of Lemma~\ref{lem:4.1}.} We make the specific choice 
\begin{equation}
\label{4.8}
u(x,y)=w(x,y),\quad\text{ and}\quad v(x,y)=-\int_0^x w_y(t,y)dt+\int_0^y w_x(0,z)dz,
\end{equation}
where using the harmonicity of $w$ we can calculate
$$u_x=w_x,\quad u_y=w_y,\quad v_x=-w_y(x,y),\quad v_y=w_x(x,y),$$
thus we obtain 
\begin{equation}
\label{4.9}
\nabla\BU=
\begin{bmatrix}
w_x & w_y\\
-w_y & w_x
\end{bmatrix},\quad
e(\BU)=
\begin{bmatrix}
w_x & 0\\
0 & w_x
\end{bmatrix}.
\end{equation}
Combining (\ref{4.3}) and (\ref{4.9}) we establish (\ref{4.2}) with $a_{12}$ (where $a_{12}$ is one of the off-diagonal entry of the skew-symmetric matrix $\BA$ coming from the estimate (\ref{1.3}) for the vector field $\BU$ in the domain $
D$) instead of $a,$ but it is clear that the quantity $\|w_y-\mu\|_{L^2(D)}^2$ is minimized at $\mu=a,$ thus we get (\ref{4.2}).
\end{proof}

The next lemma provides a key estimate in the analysis.
\begin{lemma}
\label{lem:4.2}
Let $h\in\left(0,\frac{1}{5}\right)$ and let $\varphi_1,\varphi_2,C_1,C_2$ and the domain $D$ be as in Lemma~\ref{lem:4.1}. There exists a constant $C>0,$ depending  only on $C_1$ and $C_2,$ such that any harmonic function $w\in C^2(D)$ fulfills the inequality
\begin{equation}
\label{4.10}
\|w_y\|_{L^2(D)}^2\leq C\left(\frac{1}{h}\|w\|_{L^2(D)}\cdot\|w_x\|_{L^2(D)}+\|w\|_{L^2(D)}^2+\|w_x\|_{L^2(D)}^2\right).
\end{equation}
\end{lemma}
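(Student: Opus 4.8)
The plan is to bootstrap from Lemma~\ref{lem:4.1}, whose conclusion $\|w_y-a\|_{L^2(D)}\leq \frac{c}{h}\|w_x\|_{L^2(D)}$ already controls $w_y$ up to the additive constant $a=\frac{1}{|D|}\int_D w_y$. Writing $\|w_y\|^2 = \|w_y-a\|^2 + |a|^2|D|$ (the cross term vanishes by the definition of the average), it remains to bound $|a|^2|D|$, i.e. to show that $|a|\,\sqrt{|D|}\lesssim \frac1h\|w\|^{1/2}\|w_x\|^{1/2}+\|w\|+\|w_x\|$, since $\sqrt{|D|}$ is of order one times a power of $h$ that only helps. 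The quantity $|a|$ is essentially the mean vertical derivative of $w$, and the key is that on a thin domain the mean of $w_y$ over the full slab can be recovered, up to a controlled error, from the behavior of $w$ itself along the ``long'' direction $y$, because integrating $w_y$ in $y$ telescopes to boundary values of $w$.

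Concretely, I would estimate $a$ as follows. For each fixed $x$ in the common cross-section range, $\int w_y(x,y)\,dy$ is a difference of values of $w$ at the top and bottom $y$-ends; averaging this identity over $x$ and over $D$, and using that the $y$-interval has length one while the $x$-thickness is of order $h$, expresses $a\cdot|D|$ as an integral of $w$ over the ``short'' faces $\{y=0\}$ and $\{y=1\}$ of $D$ (each of measure of order $h$), plus lower-order terms coming from the $x$-dependence of $\varphi_1,\varphi_2$ (controlled by $|\nabla\varphi_i|\leq C_2 h$). A trace-type inequality on $D$ then bounds these face integrals: by the thin-domain trace estimate (integrate $\frac{\partial}{\partial y}(w^2)$ and $\frac{\partial}{\partial x}(w^2)$ appropriately, or rescale the order-one blocks $D_k$ as in Lemma~\ref{lem:4.1}), one gets $\int_{\{y=0\}\cup\{y=1\}} w^2 \lesssim \|w\|_{L^2(D)}^2 + \|w\|_{L^2(D)}\|\nabla w\|_{L^2(D)}$, hence by Cauchy--Schwarz $|a|\sqrt{|D|}\lesssim \|w\|+\|w\|^{1/2}(\|w_x\|+\|w_y\|)^{1/2}$. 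The appearance of $\|w_y\|$ on the right is circular, so I would absorb it: combining with the Lemma~\ref{lem:4.1} bound $\|w_y\|\leq |a|\sqrt{|D|}+\frac ch\|w_x\|$, substitute and use Young's inequality ($\|w\|^{1/2}\|w_y\|^{1/2}\leq \epsilon\|w_y\|+C_\epsilon\|w\|$) to move the $\|w_y\|$ term to the left with a small coefficient.

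Putting the pieces together: $\|w_y\|^2 \leq 2\|w_y-a\|^2 + 2|a|^2|D| \leq \frac{2c^2}{h^2}\|w_x\|^2 + 2|a|^2|D|$, and $|a|^2|D|$ is bounded (after the absorption step) by $C(\|w\|^2 + \|w\|(\|w_x\|+\|w_y\|))$; absorbing $\tfrac12\|w_y\|^2$ and then using $\frac{1}{h^2}\|w_x\|^2 = \frac1h\cdot\frac1h\|w_x\|^2$ together with $\frac1h\|w_x\|\cdot\|w_x\|\leq$ ... — here I need the right-hand side of \eqref{4.10} to feature $\frac1h\|w\|\|w_x\|$ rather than $\frac1{h^2}\|w_x\|^2$, so the $h^{-2}$ term must actually be controlled better. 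The resolution is that the crude bound $\|w_y-a\|\leq\frac ch\|w_x\|$ is used only after noting $\|w_x\|^2\leq \|w\|\cdot\|w_{xx}\|$-type gains are \emph{not} available; instead one keeps $\frac1{h^2}\|w_x\|^2$ and observes it is dominated when one instead bounds $\|w_y-a\|$ more carefully, or simply accepts that the stated inequality \eqref{4.10} with the cross term $\frac1h\|w\|\|w_x\|$ requires pairing the $\frac1h$ loss in Lemma~\ref{lem:4.1} against a compensating $h$ from a Poincar\'e estimate in the thin $x$-direction, $\|w_x\|\lesssim$ (no, that fails for general harmonic $w$) --- \textbf{the main obstacle} is precisely reconciling the $h^{-2}$ from squaring Lemma~\ref{lem:4.1} with the $h^{-1}$ allowed in \eqref{4.10}, which I expect is achieved by not squaring prematurely: estimate $\|w_y\|^2 = (w_y,w_y-a)+(w_y,a) = (w_y-a,w_y-a)+ a(|D|a)$, bound $\|w_y-a\|^2\leq \|w_y-a\|\cdot\frac ch\|w_x\|$ and then $\|w_y-a\|\leq \|w_y\|+|a|\sqrt{|D|}$, so $\|w_y-a\|^2 \leq \frac ch\|w_x\|\|w_y\| + \frac ch\|w_x\||a|\sqrt{|D|}$; now Young's inequality on $\frac ch\|w_x\|\|w_y\|$ against the thin-domain gain hidden in the trace bound for $|a|\sqrt{|D|}$ closes everything with the stated $\frac1h\|w\|\|w_x\|$ cross term. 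I would carry out this last bookkeeping explicitly, tracking that every constant depends only on $C_1,C_2$.
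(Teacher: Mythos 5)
Your opening reduction is fine as far as it goes: splitting $\|w_y\|^2=\|w_y-a\|^2+|a|^2|D|$ and bounding the mean via the divergence theorem plus thin-domain trace estimates does give $|a|^2|D|\lesssim \|w\|^2+\|w\|\,\|w_y\|$, and the $\|w_y\|$ there can indeed be absorbed by Young's inequality. The genuine gap is in the other half, and you identify it yourself but do not resolve it: Lemma~\ref{lem:4.1} applied on all of $D$ only gives $\|w_y-a\|\leq \frac{c}{h}\|w_x\|$, and no amount of ``not squaring prematurely'' turns this into the interpolation term $\frac1h\|w\|\,\|w_x\|$. Your final chain leaves you with $\frac{c}{h}\|w_x\|\,\|w_y\|$ (or $\frac{c}{h}\|w_x\|\,\|w_y-a\|$) on the right; absorbing the factor $\|w_y\|$ into the left-hand side, whether by Young's inequality or by solving the quadratic inequality in $\|w_y\|$, necessarily reinstates $\frac{1}{h^2}\|w_x\|^2$, which is strictly larger than the allowed right-hand side of (\ref{4.10}) precisely in the regime $\|w\|\ll h^{-1}\|w_x\|$. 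There is no ``thin-domain gain hidden in the trace bound'' that can be played against this term: the trace argument only controls $a$, not the fluctuation $w_y-a$, so the sentence ``closes everything with the stated cross term'' is an unsupported hope rather than a step.

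The missing idea is a multiscale use of the rigidity estimate combined with an optimization over the scale. The paper never applies Lemma~\ref{lem:4.1} globally; it applies its rescaled form on top and bottom subdomains of adjustable height $z\in[h,\tfrac14]$, where the constant is $c z^2/h^2$, and it controls the corresponding local averages $a_1,a_2$ by testing $\Delta w=0$ against $\varphi w$ with a cutoff $\varphi$ adapted to $z$, which produces terms of the form $\frac1h\int_D|w\,w_x|+\frac{1}{z^2}\|w\|^2+\frac{z^2}{h^2}\|w_x\|^2$. Choosing $z\sim\bigl(h\|w\|/\|w_x\|\bigr)^{1/2}$ (with the boundary cases $z=h$ and $z=\tfrac14$ handled separately) balances the last two terms and is exactly what generates $\frac1h\|w\|\,\|w_x\|$; this optimization-over-scale step has no counterpart in your plan. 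In addition, that argument only controls $w_y$ on the central column $|x|<h/2$ and near the horizontal ends, and a further step is needed to reach the full thickness, using the weighted bound $\|\delta\nabla w_x\|\leq 2\|\nabla w_x\|$ for the harmonic function $w_x$ together with the one-dimensional Lemma~\ref{lem:4.3}; your proposal contains no substitute for this either, though the primary failure is the $h^{-2}$ obstruction above.
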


\begin{proof}
We again divide the proof into three steps for the convenience of the reader. In the first step we obtain an interior estimate on the partial derivative $w_y,$ in the second step we prove a similar estimate on $w_y$ in the parts of the domain that are located close to the horizontal boundary of $D$,  and in the third step we extend the interior estimate from the first part up to the boundary and thus complete the proof. Let us point out that all the norms in the proof are $L^2(D)$ and the constant $C>0$ depends only on $C_1$ and $C_2$ unless specified otherwise.\\
\textbf{Step 1. An interior estimate on $w_y.$} \textit{There exists a constant $C>0$ such that for any harmonic function $w\in C^2(R)$ the inequality holds:}
\begin{equation}
\label{4.11}
\int_{(-\frac{h}{2},\frac{h}{2})\times(0,1)}|w_y|^2\leq C\left(\frac{1}{h}\|w\|\cdot\|w_x\|+\|w\|^2+\|w_x\|^2\right).
\end{equation}
Denote for $t\in [0,h]$ and $z\in [0,\frac{1}{2}]$ the rectangles $R_{t,z}=(-t,t)\times(z,1-z),$ $R_{z}^{top}=(-h,h)\times(1-z,1),$ 
$R_{z}^{bot}=(-h,h)\times(0,z)$ and the middle, top and bottom parts of $D$ similarly:  
$D_{z}=\{(x,y) \ : \ y\in(z,1-z), x\in (-\varphi_1(y),\varphi_2(y))\},$  $D_{z}^{top}=\{(x,y) \ : \ y\in1-z,1), x\in (-\varphi_1(y),\varphi_2(y))\},$ 
$D_{z}^{bot}=\{(x,y) \ : \ y\in(0,z), x\in (-\varphi_1(y),\varphi_2(y))\}.$ 
Let now $z\in(h,\frac{1}{4}]$ be a parameter and let $\varphi(y)\colon[0,1]\to[0,1]$ be a smooth cutoff function such that
$\varphi(y)=1$ for $y\in[z,1-z]$ and  $|\nabla \varphi(y)|\leq \frac{2}{z}$ for $y\in[0,1].$ We multiply the equality
$-\Delta w=0$ in $D$ by $\varphi w$ and integrate the obtained identity by parts over $R_{t,0}$ to get
$$\int_{R_{t,0}}\nabla(\varphi w)\cdot\nabla w=\int_0^1 \left([\varphi ww_x](t,y)-[\varphi ww_x](-t,y)\right)dy,$$
which due to the choice of $\varphi$ implies the estimate
\begin{equation}
\label{4.12}
\int_{R_{t,z}}|\nabla w|^2\leq \int_0^1 \left(|[\varphi ww_x](t,y)|+|[\varphi ww_x](t,y)|\right)dy
+\frac{2}{z}\int_{R_{z}^{bot}\cup R_{z}^{top}}|ww_y|.
\end{equation}
Integrating (\ref{4.12}) in $t$ over $(\frac{h}{2},h)$ we discover
\begin{equation*}
\int_{R_{\frac{h}{2},z}}|\nabla w|^2\leq \frac{2}{h}\int_{D}|ww_x|+\frac{2}{z}\int_{R_{z}^{bot}\cup R_{z}^{top}}|ww_y|,
\end{equation*}
which in turn implies the estimate (by the Cauchy-Schwartz)
\begin{equation}
\label{4.13}
\int_{R_{\frac{h}{2},z}}|\nabla w|^2\leq \frac{2}{h}\int_{D}|ww_x|+\frac{1}{\epsilon^2z^2}\int_{R_{z}^{bot}\cup R_{z}^{top}}w^2
+\epsilon^2\int_{R_{z}^{bot}\cup R_{z}^{top}}|w_y|^2,
\end{equation}
where $\epsilon>0$ is a parameter yet to be chosen. It is clear that (\ref{4.13}) gives for $2z$ in place of $z$ the estimate
\begin{equation}
\label{4.14}
\int_{R_{\frac{h}{2},2z}}|\nabla w|^2\leq \frac{2}{h}\int_{D}|ww_x|+\frac{1}{4\epsilon^2z^2}\int_{R_{2z}^{bot}\cup R_{2z}^{top}}|w|^2
+\epsilon^2\int_{R_{2z}^{bot}\cup R_{2z}^{top}}|w_y|^2.
\end{equation}
Note, that the estimate (\ref{4.3}) is invariant under variable change $(x,y)\to (\lambda x,\lambda y),$ thus we have for the function $w$ on the domains 
$D_{2z}^{bot}$ and $D_{2z}^{top}:$ 
\begin{equation}
\label{4.15}
\int_{D_{2z}^{bot}} |w_y-a_1|^2\leq \frac{cz^2}{h^2}\int_{D_{2z}^{bot}} |w_x|^2,\quad\text{and}\quad
\int_{D_{2z}^{top}} |w_y-a_2|^2\leq \frac{cz^2}{h^2}\int_{D_{2z}^{top}} |w_x|^2,
\end{equation}
for some $a_1,a_2\in\mathbb R.$ We next have from (\ref{4.15}) and the triangle inequality, that
\begin{align}
\label{4.16}
\int_{R_{\frac{h}{2},z}}|\nabla w|^2&\geq \int_{R_{\frac{h}{2},z}}|w_y|^2\\ \nonumber
&\geq \int_{(-\frac{h}{2},\frac{h}{2})\times(z,2z)\cup(-\frac{h}{2},\frac{h}{2})(1-2z,1-z)}|w_y|^2\\ \nonumber
&\geq \frac{1}{2}\int_{(-\frac{h}{2},\frac{h}{2})\times(z,2z)}a_1^2+\frac{1}{2}\int_{(-\frac{h}{2},\frac{h}{2})\times(1-2z,1-z)}a_2^2\\ \nonumber
&-\int_{(-\frac{h}{2},\frac{h}{2})\times(z,2z)}|w_y-a_1|^2-\int_{(-\frac{h}{2},\frac{h}{2})\times(1-2z,1-z)}|w_y-a_2|^2\\ \nonumber
&\geq \frac{hz}{2}(a_1^2+a_2^2)-\frac{cz^2}{h^2}\int_{D_{2z}^{bot}} |w_x|^2-\frac{cz^2}{h^2}\int_{D_{2z}^{top}}|w_x|^2.
\end{align}
We similarly have the estimates
\begin{align}
\label{4.17}
\epsilon^2\int_{D_{z}^{bot}}|w_y|^2&\leq 2\epsilon^2\int_{D_{z}^{bot}}|w_y-a_1|^2+2\epsilon^2\int_{D_{z}^{bot}}a_1^2\\ \nonumber
&\leq \frac{2c\epsilon^2z^2}{h^2}\int_{D_{z}^{bot}} |w_x|^2+C\epsilon^2zha_1^2,
\end{align}
and
\begin{align}
\label{4.18}
\epsilon^2\int_{D_{z}^{top}}|w_y|^2&\leq 2\epsilon^2\int_{D_{z}^{top}}|w_y-a_2|^2+2\epsilon^2\int_{D_{z}^{top}}a_2^2\\ \nonumber
&\leq \frac{2c\epsilon^2z^2}{h^2}\int_{D_{z}^{top}} |w_x|^2+C\epsilon^2zha_2^2.
\end{align}
Combining now (\ref{4.13}) and (\ref{4.16})-(\ref{4.18}) we discover
$$
hz\left(\frac{1}{2}-C\epsilon^2\right)(a_1^2+a_2^2)\leq  \frac{4}{h}\int_{D}|ww_x|+\frac{1}{\epsilon^2z^2}\int_{D_{z}^{bot}\cup D_{z}^{top}}|w|^2
+\frac{cz^2}{h^2}\int_{D_{2z}^{bot}\cup D_{2z}^{top}}|w_x|^2+\frac{2c\epsilon^2z^2}{h^2}\int_{D_{z}^{bot}\cup D_{z}^{top}}|w_x|^2,
$$
which gives by choosing $\epsilon=\frac{1}{2\sqrt{C}},$
\begin{equation}
\label{4.19}
\frac{hz}{4}(a_1^2+a_2^2)\leq \frac{4}{h}\int_{D}|ww_x|+\frac{C}{z^2}\int_{D_{z}^{bot}\cup D_{z}^{top}}|w|^2
+\frac{Cz^2}{h^2}\int_{D_{2z}^{bot}\cup D_{2z}^{top}}|w_x|^2.
\end{equation}
Next we combine (\ref{4.15}) and (\ref{4.19}), and apply the triangle inequality to get the bound
\begin{equation}
\label{4.20}
\int_{D_{2z}^{bot}\cup D_{2z}^{top}}|w_y|^2\leq C\left(\frac{1}{h}\int_{D}|ww_x|+\frac{1}{z^2}\int_{D_{z}^{bot}\cup D_{z}^{top}}|w|^2
+\frac{z^2}{h^2}\int_{D_{2z}^{bot}\cup D_{2z}^{top}}|w_x|^2\right).
\end{equation}
Consequently we get from (\ref{4.14}) (for $\epsilon=1$) and (\ref{4.20}) the key interior estimate
\begin{equation}
\label{4.21}
\int_{R_{\frac{h}{2},0}}|w_y|^2\leq C\left(\frac{1}{h}\int_{D}|ww_x|+\frac{1}{z^2}\|w\|^2
+\frac{z^2}{h^2}\|w_x\|^2\right).
\end{equation}
The strategy is to minimize the right hand side of (\ref{4.21}) subject to the constraint $h\leq z\leq\frac{1}{4}$ on the parameter $z.$ Denote $z_0=\left(\frac{h\|w\|}{\|w_x\|}\right)^{1/2}$ and consider the following cases:\\
\textbf{Case 1.} Assume $\|w\|=0.$ In this case (\ref{4.11}) is obviously fulfilled.\\
\textbf{Case 2.} Assume $\|w_x\|=0.$ In this case we get (\ref{4.11}) from (\ref{4.21}) by choosing $z=\frac{1}{5}.$\\
\textbf{Case 3.} Assume
\begin{equation}
\label{4.22}
\|w\|,\|w_x\|>0\quad\text{and}\quad z_0\in \left[h,\frac{1}{4}\right).
\end{equation}
In this case to optimize (\ref{4.21}), one must naturally choose $z$ so that $\frac{1}{z^2}\|w\|^2=\frac{z^2}{h^2}\|w_x\|^2,$ which gives the value $z=z_0,$ that meets the constraint $h\leq z<\frac{1}{4}$ due to the assumptions in (\ref{4.22}). The value of $\frac{1}{z^2}\|w\|^2+\frac{z^2}{h^2}\|w_x\|^2$ with the above choice will be $\frac{2}{h}\|w\|\cdot\|w_x\|$ and (\ref{4.11}) will follow from (\ref{4.21}) by the Cauchy-Schwartz inequality.\\
\textbf{Case 4.} Assume
\begin{equation}
\label{4.23}
\|w\|,\|w_x\|>0\quad\text{and}\quad z_0>\frac{1}{4}.
\end{equation}
In this case the choice of a $z$ is again straightforward and we make the choice $z=\frac{1}{4}.$ It is then clear that we have by virtue of (\ref{4.23}), the estimate
$$\frac{z^2}{h^2}\|w_x\|^2=\frac{1}{16h^2}\|w_x\|^2\leq \|w\|^2,$$
thus (\ref{4.11}) follows.\\
\textbf{Case 5.} Assume
\begin{equation}
\label{4.24}
\|w\|,\|w_x\|>0\quad\text{and}\quad z_0<h.
\end{equation}
The choice of $z$ in this case is again quite straightforward, which is actually $z=h.$ The condition $z_0<h$ gives the estimate
$\frac{1}{h^2}\|w\|^2\leq \|w_x\|^2,$ thus (\ref{4.11}) again follows from (\ref{4.21}).\\
\textbf{Step 2. An estimate near the horizontal boundary of $D$.} \textit{There exists a constant $C>0,$ such that for any harmonic function $w\in C^2(R)$ the inequality holds:}
\begin{equation}
\label{4.25}
\int_{D_{h}^{bot}\cup D_{h}^{top}}|w_y|^2\leq C\left(\frac{1}{h}\|w\|\cdot\|w_x\|+\|w\|^2+\|w_x\|^2\right).
\end{equation}
The strategy of proving (\ref{4.25}) is to obtain it from (\ref{4.20}) by a suitable choice of $z$ subject to the constraint $h\leq z\leq\frac{1}{4}.$ The proof is the same as above, including the choice of $z$ thus we omit the details here. \\
\textbf{Step 3. Proof of (\ref{4.10}).} We recall the following two auxiliary lemmas proven in [\ref{bib:Harutyunyan.1}, Lemma~2.4] and in [\ref{bib:Kon.Ole.2}, Lemma~3] respectively.  
\begin{lemma}
\label{lem:4.3}
Assume $\lambda\in (0,1)$, $0<a<b$ and $f\colon[a,b]\to\mathbb R$ is absolutely continuous. Then the inequality holds:
\begin{equation}
\label{4.26}
\int_{a+\lambda(b-a)}^b f^2(t)dt\leq \frac{2}{\lambda}\int_{a}^{a+\lambda(b-a)}f^2(t)dt+4\int_a^{b}(b-t)^2f'^2(t)dt.
\end{equation}
\end{lemma}
\begin{lemma}
\label{lem:4.4}
Let $n\in\mathbb N,$ and let $\Omega\subset\mathbb R^n$ be open bounded and connected. Denote $\delta(x)=\mathrm{dist}(x,\partial\Omega).$ Then for any harmonic function $u\in C^2(\Omega)$ there holds:
\begin{equation}
\label{4.27}
\|\delta\nabla u\|_{L^2(\Omega)}\leq 2\|\nabla u\|_{L^2(\Omega)}.
\end{equation}
\end{lemma}
We fix a point $y\in (h,1-h)$ and apply Lemma~\ref{lem:4.3} to the function $w_y(x,y)$ on the segment $[0,\varphi_2(y)]$ as a function in $x$ for the value 
$\lambda=\frac{h}{2\varphi_2(y)}.$ We have by virtue of (\ref{4.1}), that
\begin{align*}
\int_{\frac{h}{2}}^{\varphi_2(y)}|w_y(x,y)|^2dx\leq & \frac{4\varphi_2(y)}{h}\int_{0}^{\frac{h}{2}}|w_y(x,y)|^2dx+4\int_{0}^{\varphi_2(y)}|(\varphi_2(y)-x)w_{xy}(x,y)|^2dx\\
&\leq 4C_1\int_ 0^{\frac{h}{2}}|w_y(x,y)|^2dx+4\int_{0}^{\varphi_2(y)}|(\varphi_2(y)-x)w_{xy}(x,y)|^2dx,
\end{align*}
thus integrating in $y$ over $(h,1-h)$ we obtain the estimate
\begin{equation}
\label{4.28}
\int_{T}|w_y|^2\leq 4C_1\int_{(0,h/2)\times(h,1-h)}|w_y|^2+4\int_{T\cup \left((0,\frac{h}{2})\times(h,1-h)\right)}|(\varphi_2(y)-x)w_{xy}|^2,
\end{equation}
where we set $T=\{(x,y) \ : \ y\in(h,1-h), x\in(\frac{h}{2},\varphi_2(y))\}.$ Observe that $w$ being harmonic in $D$ is smooth and thus $w_x$ is harmonic in $D$ as well. On the other hand due to the bounds (\ref{4.1}) we have $|\varphi_2(y)-x|\leq C\delta(x,y),$ where $\delta(x,y)$ is the distance function from the boundary of $D.$ Therefore we get by Lemma~\ref{lem:4.4} the estimate
$$
\int_{T\cup \left((0,h/2)\times(h,1-h)\right)}|(\varphi_2(y)-x)w_{xy}|^2\leq C\int_{D}| \delta \nabla w_x|^2 \leq C \int_{D}|w_x|^2,
$$
which gives owing back to (\ref{4.28}) the key estimate
\begin{equation}
\label{4.29}
\int_{T}|w_y|^2\leq C\int_{(0,h/2)\times(h,1-h)}|w_y|^2+C\int_{D}|w_{x}|^2.
\end{equation}
Similarly we have the same estimate for the left part of the rectangle:
\begin{equation}
\label{4.30}
\int_{T'}|w_y|^2\leq C\int_{(-\frac{h}{2},0)\times(h,1-h)}|w_y|^2+C\int_{D}|w_{x}|^2,
\end{equation}
where $T'=\{(x,y) \ : \ y\in(h,1-h), x\in(-\varphi_1(y), -\frac{h}{2})\}.$ It remains to combine the estimates (\ref{4.11}), (\ref{4.25}), (\ref{4.29}) and (\ref{4.30}) to obtain (\ref{4.10}). 
\end{proof}
The following consequence of Lemma~\ref{lem:4.2} will be useful in the proof of the main results. 
\begin{lemma}
\label{lem:4.5}
Let $h,b>0$ be such that $h<\frac{b}{4}.$ Assume $\varphi_1,\varphi_2 \colon [0,b]\to (h,\infty)$ are Lipschitz such that 
\begin{equation}
\label{4.31}
h\leq \varphi_i(y)\leq C_1h,\quad\text{and}\quad |\nabla\varphi_i(y)|\leq C_2h\quad \text{for all}\quad y\in [0,b], i=1,2.
\end{equation}
Set $D=\{(x,y) \ : \ y\in (0,b), x\in (-\varphi_1(y), \varphi_2(y))\}.$ Then there exists a constant $C>0,$ depending 
only on $C_1$ and $C_2,$ such that any harmonic function $w\in C^2(D)$ fulfills the inequality
\begin{equation}
\label{4.32}
\|w_y\|_{L^2(D)}^2\leq C\left(\frac{1}{h}\|w\|_{L^2(D)}\cdot\|w_x\|_{L^2(D)}+\frac{1}{b^2}\|w\|_{L^2(D)}^2+\|w_x\|_{L^2(D)}^2\right).
\end{equation}
\end{lemma}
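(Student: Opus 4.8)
The plan is to reduce Lemma~\ref{lem:4.5} to Lemma~\ref{lem:4.2} by an affine rescaling of the strip $D$ so that its length in the $y$-direction becomes exactly $1$. Concretely, set $\lambda = 1/b$ and define the rescaled domain $\Tld{D} = \{(\tilde x,\tilde y) : \tilde y\in(0,1),\ \tilde x\in(-\tilde\varphi_1(\tilde y),\tilde\varphi_2(\tilde y))\}$ where $\tilde\varphi_i(\tilde y) = \varphi_i(b\tilde y)/b$, and define $\tilde w(\tilde x,\tilde y) = w(b\tilde x, b\tilde y)$. The isotropic scaling $(x,y)\mapsto(x/b,y/b)$ maps harmonic functions to harmonic functions, so $\tilde w\in C^2(\Tld D)$ is harmonic. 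The point of scaling by the \emph{same} factor in both variables is precisely to preserve the Laplace equation; an anisotropic scaling would not.

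Next I would check the hypotheses of Lemma~\ref{lem:4.2} for $\Tld D$ with a rescaled thickness parameter $\tilde h = h/b$. From (\ref{4.31}) we get $\tilde h\le\tilde\varphi_i(\tilde y)\le C_1\tilde h$ directly, and $|\nabla\tilde\varphi_i| = |\varphi_i'(b\tilde y)|\le C_2 h = C_2 b\,\tilde h$; this last bound is worse than what Lemma~\ref{lem:4.2} wants by the factor $b$. So the cleanest route is actually to invoke Lemma~\ref{lem:4.1} and Lemma~\ref{lem:4.2} in the form already used inside their proofs — namely their scale-invariance. A better approach: observe that the statement and proof of Lemma~\ref{lem:4.2} only ever used (\ref{4.1}) through the two facts that each rescaled vertical slice has order-one aspect ratio and that the horizontal walls have slope $O(1)$ relative to the thickness. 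If one instead simply rescales $x$ and $y$ \emph{both} by $1/b$ one has $|\nabla\tilde\varphi_i| \le C_2 h$ still (the derivative of $\varphi_i(b\tilde y)/b$ in $\tilde y$ is $\varphi_i'(b\tilde y)$, unchanged), so the slope bound $|\nabla\tilde\varphi_i|\le C_2 b\tilde h$ does degrade; but since the conclusion of Lemma~\ref{lem:4.2} is scale-invariant, one may instead first rescale so that $\tilde h=h/b<1/5$ (which holds because $h<b/4$ gives $h/b<1/4$, and one may shrink further or note $1/4<1/5$ fails — so instead take the constant $5$ into account: $h<b/4$ does not immediately give $h/b<1/5$, but the proof of Lemma~\ref{lem:4.2} works for $h<1/4$ after trivial modification, or one passes to $h/b\le 1/4$ and absorbs). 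I would phrase it as: apply Lemma~\ref{lem:4.2} to $\tilde w$ on $\Tld D$, obtaining
\begin{equation}
\label{plan1}
\|\tilde w_{\tilde y}\|_{L^2(\Tld D)}^2\le C\left(\frac{1}{\tilde h}\|\tilde w\|_{L^2(\Tld D)}\|\tilde w_{\tilde x}\|_{L^2(\Tld D)}+\|\tilde w\|_{L^2(\Tld D)}^2+\|\tilde w_{\tilde x}\|_{L^2(\Tld D)}^2\right),
\end{equation}
then undo the change of variables.

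The bookkeeping of the Jacobians is the only real content. Under $\tilde x = x/b,\ \tilde y=y/b$ one has $d\tilde x\,d\tilde y = b^{-2}dx\,dy$, $\tilde w_{\tilde x} = b\,w_x$, $\tilde w_{\tilde y} = b\,w_y$, so $\|\tilde w_{\tilde y}\|_{L^2(\Tld D)}^2 = \|w_y\|_{L^2(D)}^2$, $\|\tilde w_{\tilde x}\|_{L^2(\Tld D)}^2 = \|w_x\|_{L^2(D)}^2$, and $\|\tilde w\|_{L^2(\Tld D)}^2 = b^{-2}\|w\|_{L^2(D)}^2$. Also $1/\tilde h = b/h$. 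Substituting these into (\ref{plan1}): the first term becomes $\frac{b}{h}\cdot\frac{1}{b}\|w\|\cdot\|w_x\| = \frac{1}{h}\|w\|\|w_x\|$, the second becomes $\frac{1}{b^2}\|w\|^2$, the third stays $\|w_x\|^2$, which is exactly (\ref{4.32}). The constant $C$ is unchanged and still depends only on $C_1,C_2$, since those bounds are scale-invariant for the isotropic rescaling in the form used. The main obstacle — really the only thing requiring care — is making sure the hypothesis $h<\tilde h_0$ of Lemma~\ref{lem:4.2} is met by $\tilde h = h/b$: here $h<b/4$ gives $\tilde h<1/4$; since Lemma~\ref{lem:4.2} as stated requires $\tilde h<1/5$, I would either observe that its proof goes through verbatim for $\tilde h<1/4$ (the only place $1/5$ appears is the innocuous choice $z=1/5$ in Case~2, which can be replaced by $z=1/4$), or else simply state the lemma hypothesis as $h<b/4$ matching what is needed. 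Everything else is the routine substitution above.
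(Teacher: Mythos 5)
Your proposal is essentially identical to the paper's proof: the paper handles Lemma~\ref{lem:4.5} in one line by applying Lemma~\ref{lem:4.2} to $v(x,y)=w(bx,by)$ on the rescaled domain $\frac{1}{b}\cdot D$, and your Jacobian bookkeeping ($\|\tilde w_{\tilde y}\|^2=\|w_y\|^2$, $\|\tilde w_{\tilde x}\|^2=\|w_x\|^2$, $\|\tilde w\|^2=b^{-2}\|w\|^2$, $1/\tilde h=b/h$) correctly reproduces (\ref{4.32}). The two caveats you flag --- the rescaled slope bound becoming $C_2 b\,\tilde h$ rather than $C_2\tilde h$, and $\tilde h<1/4$ versus the stated threshold $1/5$ --- are glossed over by the paper's one-line proof as well (and are harmless in the paper's application, where $b$ is of order one), so they are not defects specific to your argument.
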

\begin{proof}
The proof follows from Lemma~\ref{lem:4.2} by applying it to the harmonic function $v(x,y)=w(bx,by)$ defined on the rescaled domain $\frac{1}{b}\cdot D.$
\end{proof}

\section{Proof of the main results}
\label{sec:5}
\setcounter{equation}{0}

\begin{proof}[Proof of Theorem~\ref{th:3.1}]
The strategy is proving the estimate (\ref{3.1}) for the simplified gradient $\BF$ in place of $\nabla \Bu$ and then using the fact that $\BF$ and $\nabla\Bu$ are close to the order of $h,$ return to (\ref{3.1}). In the sequel the norm $\|\cdot\|$ will be the $L^2$ norm $\|\cdot\|_{L^2(\Omega)}$ unless specified otherwise. 
We prove the estimate (\ref{3.1}) block by block by freezing each of the variables $t,$ $\Gth$ and $z$ and considering the appropriate inequality on the $t,\Gth,z=$const cross sections of $\Omega.$ The following lemma is a crucial tool in the estimation of the off-diagonal entries of the blocks $t\Gth$ and $tz.$
\begin{lemma}
\label{lem:5.1}
Let $h,b>0$ with $0<h<\frac{b}{4}$ and assume the Lipschitz functions $\varphi_1,\varphi_2\colon[0,b]\to(0,\infty)$ satisfy the conditions 
\begin{equation}
\label{5.1}
h\leq \varphi_i(y)\leq C_1h,\quad |\nabla\varphi_i(y)|\leq C_2h,\quad\text{for all}\quad y\in[0,b], i=1,2.
\end{equation}
Denote the thin two dimensional domain $D=\{(x,y) \ : \  y\in(0,b), x\in (-\varphi_1(y), \varphi_2(y))\}.$ Given a displacement 
$\BU=(u(x,y),v(x,y))\in H^1(D,\mathbb R^2),$ the vector fields $\BGa,\BGb\in W^{1,\infty}(D,\mathbb R^2)$ and the function $w\in H^1(D,\mathbb R),$ denote the perturbed gradient as follows:
\begin{equation}
\label{5.2}
\BM=
\begin{bmatrix}
u_{x} & u_{y}+\BGa\cdot\BU\\
v_{x} & v_{y}+\BGb\cdot\BU+w
\end{bmatrix}.
\end{equation}
Assume $\epsilon\in (0,1),$ then the following Korn-like interpolation inequality holds:
\begin{equation}
\label{5.3}
\|\BM\|^2\leq C\left(\frac{\|u\|\cdot \|e(\BM)\|}{h}+\|e(\BM)\|^2+\left(\frac{1}{\epsilon}+h^2\right)\|\BU\|^2+(\epsilon+h^2)(\|w\|^2+\|w_x\|^2)\right),
\end{equation}
for all $h$ small enough. Here $C$ depends only on the quantities $b,$ $\|\BGa\|_{W^{1,\infty}},$ $\|\BGb\|_{W^{1,\infty}}$ and the norm $\|\cdot\|$ is the $L^2$ norm $\|\cdot\|_{L^2(D)}.$
\end{lemma}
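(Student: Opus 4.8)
\textbf{Proof proposal for Lemma~\ref{lem:5.1}.}

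The plan is to reduce the matrix $\BM$ to the purely harmonic situation covered by Lemma~\ref{lem:4.5} by subtracting off suitable lower-order terms, and then to reassemble the estimate. First I would note that the entries of the symmetrized gradient $e(\BM)$ are the three quantities $u_x$, $v_y+\BGb\cdot\BU+w$, and $\frac{1}{2}(u_y+v_x+\BGa\cdot\BU)$; since $\BGa,\BGb\in W^{1,\infty}$ and $w\in H^1$, controlling $\|\BM\|$ is equivalent, up to the terms $\|\BU\|^2$ and $\|w\|^2$ already present on the right of \eqref{5.3}, to controlling $\|u_x\|$, $\|v_x\|$, $\|u_y\|$, $\|v_y\|$. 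The diagonal entries $u_x$ and $v_y$ are essentially the diagonal of $e(\BM)$ modulo those lower-order terms, so the heart of the matter is estimating the off-diagonal derivatives $u_y$ and $v_x$, for which the antisymmetric combination $u_y-v_x$ (the ``rotation'') is invisible to $e(\BM)$ and must instead be controlled through $u$ itself — this is exactly where the harmonic-function machinery of Section~4 enters.

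The key step is to introduce the harmonic corrector. I would let $p\in H^1(D)$ solve the Dirichlet problem $\Delta p = \dvg\BU$ (or more precisely arrange that $u - p_x$ and $v - p_y$ have curl equal to a controlled quantity), so that writing $\tilde u = u$, and extracting the harmonic conjugate structure as in the proof of Lemma~\ref{lem:4.1} (equations \eqref{4.8}–\eqref{4.9}), one isolates a harmonic function $\omega$ whose $x$-derivative is bounded by $\|e(\BM)\|$ plus lower-order terms and whose $y$-derivative dominates the rotation part of $\nabla\BU$. Concretely, I expect to set up $\omega$ so that $\omega_x$ is a combination of $u_x$, $v_y$ and $e(\BM)$-entries (hence $\|\omega_x\|\lesssim \|e(\BM)\| + \|\BU\| + \|w\|$) while $\omega_y$ recovers $v_x$ up to such terms, and so that $\|\omega\|\lesssim \|u\| + h(\cdots)$. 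Applying Lemma~\ref{lem:4.5} to $\omega$ on $D$ (legitimate since $h<b/4$ and \eqref{5.1} matches \eqref{4.31}) yields
\[
\|\omega_y\|^2 \leq C\left(\frac{1}{h}\|\omega\|\cdot\|\omega_x\| + \frac{1}{b^2}\|\omega\|^2 + \|\omega_x\|^2\right),
\]
and then back-substituting the bounds for $\|\omega\|$, $\|\omega_x\|$ in terms of $\|u\|$, $\|e(\BM)\|$, $\|\BU\|$, $\|w\|$, $\|w_x\|$ produces the right-hand side of \eqref{5.3}. The parameter $\epsilon$ enters through a Young's inequality of the form $\frac{1}{h}\|u\|\cdot\|w\|\le \frac{1}{h}\|u\|\|e(\BM)\|$-type splitting, or more likely $ab\le \frac{1}{2\epsilon}a^2 + \frac{\epsilon}{2}b^2$ applied to cross terms pairing $\|\BU\|$ against $\|w\|+\|w_x\|$, which is precisely why the coefficients $\frac1\epsilon$ and $\epsilon$ appear asymmetrically in the statement.

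The main obstacle I anticipate is the bookkeeping around the corrector: one must verify that the harmonic function $\omega$ can genuinely be built so that its gradient separates the ``strain'' part (controlled by $e(\BM)$) from the ``rotation'' part (controlled via $\|u\|$ through Lemma~\ref{lem:4.5}), while the perturbation vector fields $\BGa\cdot\BU$, $\BGb\cdot\BU$ and the extra function $w$ are absorbed into the lower-order terms with the correct powers of $h$ — in particular the term $h^2\|w_x\|^2$ must come out, not $\|w_x\|^2$, which forces a careful accounting of where factors of $t$ (of size $\lesssim h$) or of $\nabla\varphi_i$ (of size $\lesssim h$) get used. A secondary technical point is that $\BU\in H^1$ only, so the harmonic decomposition should be performed on $u$ and $v$ directly rather than assuming extra smoothness; density of smooth fields and the fact that all inequalities are continuous in the $H^1$ norm handle this. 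Once the corrector is correctly set up, the remaining manipulations are triangle inequalities, Cauchy–Schwarz, and the Young inequality with parameter $\epsilon$, all routine.
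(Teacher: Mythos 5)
You have the right global strategy (reduce to the two-dimensional harmonic estimate of Lemma~\ref{lem:4.5}, then reassemble with triangle, Cauchy--Schwarz and Young inequalities), but the central object of your argument --- the harmonic function $\omega$ obtained from a corrector $p$ with $\Delta p=\dvg\BU$, with the three postulated properties $\|\omega_x\|\lesssim\|e(\BM)\|+\|\BU\|+\|w\|$, ``$\omega_y$ recovers $v_x$'', and, crucially, $\|\omega\|\lesssim\|u\|+h(\cdots)$ --- is never actually constructed, and it is precisely the last property that carries all the difficulty. The whole point of \eqref{5.3} is that the interpolation term is $\frac{1}{h}\|u\|\cdot\|e(\BM)\|$ with the \emph{first component only}; any potential-type corrector built from the full field $\BU$ (a solution of $\Delta p=\dvg\BU$, or a stream function for its divergence-free part) is generically of size $\|\BU\|$, not $\|u\|$, so the resulting $\frac{1}{h}\|\omega\|\cdot\|\omega_x\|$ term would degrade to $\frac{1}{h}\|\BU\|\cdot\|e(\BM)\|$, which is not the statement of the lemma and is too weak for its intended use. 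You would also need $h$-uniform elliptic estimates for your auxiliary Dirichlet problem on a thin Lipschitz domain, and the boundary condition for $p$ is not even specified. So, as written, the proposal has a genuine gap at its key step.

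The paper avoids all of this by applying the harmonic machinery to $u$ itself rather than to a conjugate or potential object. Writing $\BM_{f,g}$ for the matrix \eqref{5.4} with general $f,g$, one has the pointwise identity $\Delta u=(e_{11}(\BM_{f,g})-e_{22}(\BM_{f,g}))_x+(2e_{12}(\BM_{f,g}))_y+g_x-f_y$. Let $\tilde u$ be the harmonic function with $\tilde u=u$ on $\partial D$; then $u-\tilde u$ vanishes on the lateral boundary, so the Poincar\'e inequality across the thin direction gives $\|u-\tilde u\|\le Ch\|\nabla(u-\tilde u)\|$, and testing the identity with $u-\tilde u$ and integrating by parts gives $\|\nabla(u-\tilde u)\|\le C\left(\|e(\BM_{f,g})\|+h(\|f_y\|+\|g_x\|)\right)$; this factor $h$ from the thin-direction Poincar\'e inequality is, incidentally, exactly where the $h^2(\|w\|^2+\|w_x\|^2)$ term you were worried about comes from, not from factors of $t$ or $\nabla\varphi_i$. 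Since $\|\tilde u\|\le\|u\|+Ch(\cdots)$ and $\|\tilde u_x\|\le\|u_x\|+\|\nabla(u-\tilde u)\|$ with $u_x=e_{11}(\BM_{f,g})$, Lemma~\ref{lem:4.5} applied to $\tilde u$ immediately yields the bound on $\|u_y+f\|$ with the correct term $\frac{1}{h}\|u\|\cdot\|e(\BM_{f,g})\|$; the entry $v_x$ is then recovered from $2e_{12}(\BM_{f,g})-(u_y+f)$, and the diagonal entries are themselves entries of $e(\BM_{f,g})$. Finally one specializes $f=\BGa\cdot\BU$, $g=\BGb\cdot\BU+w$, bounds $\|f_y\|+\|g_x\|\le C(\|\BM\|+\|\BU\|+\|w\|+\|w_x\|)$, and applies Young's inequality with parameter $\epsilon$ to the cross term $\|u\|(\|f_y\|+\|g_x\|)$, which is where $\frac{1}{\epsilon}\|\BU\|^2$ and $\epsilon(\|w\|^2+\|w_x\|^2)$ arise (your reading of the role of $\epsilon$ was essentially correct). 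If you want to salvage your outline, drop the corrector $p$ altogether and take the harmonic part of $u$ as above; the conjugate construction of Lemma~\ref{lem:4.1} is not needed at this stage.
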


\begin{proof}
Let us point out that in the proof of Lemma~\ref{lem:5.1}, the constant $C$ may depend only on $b,$ $\|\BGa\|_{W^{1,\infty}}$ $\|\BGb\|_{W^{1,\infty}}$ as well as the norm $\|\cdot\|$ will be $\|\cdot\|_{L^2(D)}.$ First of all, we can assume by density that $\BU\in C^2(\bar D).$ For functions $f,g\in H^1(D,\mathbb R)$ denote by $\BM_{f,g}$ the matrix
\begin{equation}
\label{5.4}
\BM_{f,g}=
\begin{bmatrix}
u_{x} & u_{y}+f\\
v_{x} & v_{y}+g
\end{bmatrix}.
\end{equation}
Assume $\tilde u(x,y)$ is the harmonic part of $u$ in $D,$ i.e., it is the unique solution of the Dirichlet boundary value problem
\begin{equation}
\label{5.5}
\begin{cases}
\Delta \tilde u(x,y)=0, & (x,y)\in R\\
\tilde u(x,y)=u(x,y), & (x,y)\in \partial R.
\end{cases}
\end{equation}
Note first that due to the fact that $u-\tilde u$ vanishes on the lateral boundary of $D$ and $D$ has a thickness of order $h,$ then we have by the Poincar\'e inequality in the horizontal direction, that
\begin{equation}
\label{5.6}
\|u-\tilde u\|\leq Ch\|\nabla(u-\tilde u)\|.
\end{equation}
Multiplying the identity 
$$\Delta (u-\tilde u)=u_{xx}+u_{yy}=(e_{11}(\BM_{f,g})-e_{22}(\BM_{f,g}))_{x}+(2e_{12}(\BM_{f,g}))_{y}+g_x-f_y$$
by $u-\tilde u$ and integrating by parts over $D$ we arrive at
$$
\int_{D}|\nabla(u-\tilde u)|^2=\int_{D}\left((u-\tilde u)_{x}(e_{11}(\BM_{f,g})-e_{22}(\BM_{f,g}))+
2(u-\tilde u)_{y}e_{12}(\BM_{f,g})+(f_y-g_x)(u-\tilde u)\right),
$$
which gives by the Schwartz inequality and by virtue of (\ref{5.6}), the bound
\begin{equation}
\label{5.7}
\|\nabla(u-\tilde u)\|\leq C\left[\|e(\BM_{f,g})\|+h(\|f_y\|+\|g_x\|)\right].
\end{equation}
Combining (\ref{5.6}) and (\ref{5.7}) we obtain
\begin{align}
\label{5.8}
\|\nabla(u-\tilde u)\|&\leq C\left[\|e(\BM_{f,g})\|+h(\|f_y\|+\|g_x\|)\right],\\ \nonumber
\|u-\tilde u\|&\leq Ch\left[\|e(\BM_{f,g})\|+h(\|f_y\|+\|g_x\|)\right].
\end{align}
In the next step we utilize the fact that $\tilde u$ is harmonic, thus we can apply the key estimate (\ref{4.32}) to it. Indeed, have by virtue of (\ref{4.32}) and the triangle inequality, that
\begin{align}
\label{5.9}
\|u_{y}+f\|^2&\leq 4(\|u_{y}-\tilde u_{y}\|^2+\|\tilde u_{y}\|^2+\|f\|^2)\\ \nonumber
&\leq C\left(\|\nabla(u-\tilde u)\|^2+\frac{1}{h}\|\tilde u\|\cdot\|\tilde u_{x}\|+\|\tilde u\|^2+\|\tilde u_{x}\|^2+\|f\|^2\right)\\ \nonumber
&\leq C\left(\|\nabla(u-\tilde u)\|^2+\frac{1}{h}(\|u\|+\|u-\tilde u\|)(\|u_{x}\|+\|\nabla(u-\tilde u)\|)\right)\\ \nonumber
&+C\left(\|u\|^2+\|u-\tilde u\|^2+\|u_{x}\|^2+\|\nabla(u-\tilde u)\|^2+\|f\|^2\right).
\end{align}
Taking into account the fact that $u_x$ is an entry of $e(\BM_{f,g})$ as well as the bounds (\ref{5.8}), it is easy to see that (\ref{5.9})
yields the estimate
\begin{align}
\label{5.10}
\|u_{y}+f\|^2&\leq C\left(\frac{1}{h}\|u\|\cdot\|e(\BM_{f,g})\|+\|u\|(\|f_y\|+\|g_x\|)+h^2(\|f_y\|^2+\|g_x\|^2)\right)\\ \nonumber
&+C(\|u\|^2+\|e(\BM_{f,g})\|^2+\|f\|^2).
\end{align}
Next we have for the special case $f=\BGa\cdot\BU$ and $g=\BGb\cdot\BU+w$ the obvious bounds
\begin{align}
\label{5.11}
\|f_y\|&\leq C\|U\|_{H^1(R)}\leq C(\|\BM_{f,g}\|+\|\BU\|+\|w\|),\\ \nonumber
\|g_x\|&\leq C\|U\|_{H^1(R)}+\|w_x\|\leq C(\|\BM_{f,g}\|+\|\BU\|+\|w_x\|).
\end{align}
Consequently, we assume $\epsilon>0$ is a parameter and estimate the summand $\|u\|(\|f_y\|+\|g_x\|)$ on the right hand side of (\ref{5.10}) by the Cauchy inequality as
$$\|u\|(\|f_y\|+\|g_x\|)\leq \frac{1}{\epsilon}\|u\|^2+\epsilon(\|f_y\|+\|g_x\|)^2,$$
and then utilize (\ref{5.11}) to derive (\ref{5.3}) from (\ref{5.10}). The proof of the lemma is complete.
\end{proof}

\textbf{The block $13$.} For the block $13$ we freeze the variable $\Gth$ and deal with two-variable functions in the $\Gth$=const cross sections of $\Omega$ which are two dimensional thin domains. We aim to prove that for any $\epsilon>0$ for small enough $h$ the estimate holds:
\begin{equation}
\label{5.12}
\|F_{13}\|^2+\|F_{31}\|^2\leq C\left(\frac{\|u_t\|\cdot\|e(\BF)\|}{h}+\|e(\BF)\|^2+\frac{1}{\epsilon}\|\Bu\|^2+\epsilon \|F_{12}\|^2\right),
\end{equation}
where the norms are over the whole domain $\Omega.$
\begin{proof}
Fix $\Gth\in (0,\omega)$ and consider the displacement $\BU=(u_t,A_zu_z)$ with the vector fields $\BGa=(0,-A_z\Gk_z),$ $\BGb=(A_z^2\Gk_z,-A_{z,z})$ and the function $w=\frac{A_zA_{z,\Gth}}{A_\Gth}u_\Gth$ in the variables $t$ and $z$ over the cross section
$D=\{(t,z) \ : \ z\in (z^1(\Gth),z^2(\Gth)), t\in (-g_1^h(\Gth,z), g_2^h(\Gth,z))\}$ to prepare an application of Lemma~\ref{lem:5.1}. We have that
$$\BM=
\begin{bmatrix}
u_{t,t} & u_{t,z}-A_z\Gk_z u_z\\[3ex]
A_zu_{z,t} & A_zu_{z,z}+A_z^2\Gk_z u_t+\frac{A_zA_{z,\Gth}}{A_\Gth}u_\Gth,
\end{bmatrix},
$$
thus (\ref{5.3}) written for the above choice of $\BU,$ $\BGa,\BGb$ and $w$ and then integrated in $\Gth$ over $(0,\omega)$ yields (\ref{5.12}).
\end{proof}
\textbf{The block $12$.} The role of the variables $\Gth$ and $z$ is the completely the same, thus we have an analogous estimate
\begin{equation}
\label{5.13}
\|F_{12}\|^2+\|F_{21}\|^2\leq C\left(\frac{\|u_t\|\cdot\|e(\BF)\|}{h}+\|e(\BF)\|^2+\frac{1}{\epsilon}\|\Bu\|^2+\epsilon \|F_{31}\|^2\right).
\end{equation}
Consequently adding (\ref{5.12}) and (\ref{5.13}) and choosing the parameter $\epsilon>0$ small enough we discover
\begin{equation}
\label{5.14}
\|F_{12}\|^2+\|F_{21}\|^2+\|F_{13}\|^2+\|F_{31}\|^2\leq C\left(\frac{\|u_t\|\cdot\|e(\BF)\|}{h}+\|e(\BF)\|^2+\|\Bu\|^2\right).
\end{equation}

\textbf{The block $23$.} The analysis for this block is a bit more technical, the complications arising due to the fact that $\Omega$ does not have constant thickness and thus one can not pursue exactly the same path as for the previous blocks, because the cross section $t=2h$ for instance may not even be connected. What comes to rescue is again the localization argument in [\ref{bib:Koh.Vog.}]. We divide the proof into two steps. We first prove an appropriate estimate on the part of $\Omega$ bounded by the surfaces $t=-h$ and $t=h,$ then we extent that estimate in the normal direction to entire $\Omega.$ Denote
\begin{equation}
\label{5.15}
\Omega^h=\{(t,\Gth,z)\in\Omega \ : \ t\in (-h,h)\}.
\end{equation}
\textbf{Step 1. An estimate in $\Omega^h.$} \textit{The following estimate holds:}
\begin{equation}
\label{5.16}
\|F_{23}\|_{L^2(\Omega^h)}^2+\|F_{32}\|_{L^2(\Omega^h)}^2\leq C(\|\Bu\|_{L^2(\Omega^h)}^2+\|e(\BF)\|_{L^2(\Omega^h)}^2).
\end{equation}
Let us make the following observation: \textit{Denote $D_t=\{(\Gth,z) \ : \  \Gth\in(0,\omega), z\in(z^1(\Gth),z^2(\Gth))\}.$ Assume $\varphi=\varphi(\Gth,z)\in C^1(D_t,\mathbb R)$ satisfies the conditions
\begin{equation}
\label{5.17}
0<c_1\leq \varphi(\Gth,z)\leq c_2,\quad \|\nabla \varphi(\Gth,z)\|\leq c_3,\quad\text{for all}\quad (\Gth,z)\in D_t.
\end{equation}
For a displacement $\BU=(u,v)\in H^1(D_t,\mathbb R^2),$ denote the matrix
\begin{equation}
\label{5.18} \BM_\varphi=
\begin{bmatrix}
u_x & \varphi u_y\\
v_x & \varphi v_y
\end{bmatrix}.
\end{equation}
Then there exists a constant $c>0,$ depending only on the constants $\omega,l,L,Z$ and $c_i,\ i=1,2,3,$ such that
\begin{equation}
\label{5.19}
\|\BM_\varphi\|_{L^2(D_t)}^2\leq c(\|e(\BM_\varphi)\|_{L^2(D_t)}^2+\|u\|_{L^2(D_t)}^2+\|v\|_{L^2(D_t)}^2).
\end{equation}
}
\begin{proof}
Considering the auxiliary vector field $\BW=(u,\frac{1}{\varphi}v)\colon D_t\to\mathbb R^2,$ we have that
\begin{equation}
\label{5.20}
 \nabla \BW=
\begin{bmatrix}
u_x & u_y \\
\frac{1}{\varphi}v_x-\frac{\varphi_x}{\varphi^2}v & \frac{1}{\varphi}v_y-\frac{\varphi_y}{\varphi^2}v
\end{bmatrix},
\end{equation}
and Korn's second inequality [\ref{bib:Kon.Ole.2}, Theorem~2] gives
\begin{equation}
\label{5.21}
\|\nabla \BW\|_{L^2(D_t)}^2\leq C(\|e(\BW)\|_{L^2(D_t)}^2+\|u\|_{L^2(D_t)}^2+\|v\|_{L^2(D_t)}^2),
\end{equation}
where the constant $C$ depends only on $\omega,l,L,Z$ and $c_i,\ i=1,2,3.$ It is then clear that (\ref{5.21}) bounds the $L^2(D_t)$ norms of the partial derivatives $u_y$ and $v_x$ by that of $u_x,$ $v_y,$ $u,$ $v$ and the sum $\varphi u_y+v_x$ by the triangle inequality, 
which is basically what is claimed in (\ref{5.19}).
\end{proof}
Returning back to (\ref{5.16}), we consider the function $\varphi(\Gth,z)=\frac{A_\Gth}{A_z}$ and apply (\ref{5.19}) to the displacement field
$\BU=(u_\Gth,u_z),$ to arrive at the estimate
\begin{align*}
\|u_{\Gth,z}\|&_{L^2(D_t)}^2+\|u_{z,\Gth}\|_{L^2(D_t)}^2\\ 
&\leq C\left(\|u_{\Gth,\Gth}\|_{L^2(D_t)}^2+\|u_{z,z}\|_{L^2(D_t)}^2
+\left\|\frac{A_\Gth}{A_z}u_{\Gth,z}+u_{z,\Gth}\right\|_{L^2(D_t)}^2+\|u_{\Gth}\|_{L^2(D_t)}^2+\|u_{z}\|_{L^2(D_t)}^2\right),
\end{align*}
which gives
\begin{align}
\label{5.22}
\|u_{\Gth,z}\|&_{L^2(D_t)}^2+\|u_{z,\Gth}\|_{L^2(D_t)}^2\\ \nonumber
&\leq C(\|u_{\Gth,\Gth}\|_{L^2(D_t)}^2+\|u_{z,z}\|_{L^2(D_t)}^2+\|A_\Gth u_{\Gth,z}+A_zu_{z,\Gth}\|_{L^2(D_t)}^2+
\|u_{\Gth}\|_{L^2(D_t)}^2+\|u_{z}\|_{L^2(D_t)}^2).
\end{align}
As the norms $\int_{R_t}f^2$ and $\int_{R_t}A_\Gth A_zf^2$ are equivalent, it is clear that (\ref{5.22}) implies (\ref{5.16}) by applying the triangle inequality several times to it and integrating the obtained estimate in $t\in(-h,h).$ \\

Before starting the second step, note that by combining the estimates (\ref{5.14}) and (\ref{5.16}) we get the bound 
\begin{equation}
\label{5.23}
\|\BF\|_{L^2(\Omega^h)}^2 \leq C\left(\frac{\|u_t\|\cdot\|e(\BF)\|}{h}+\|e(\BF)\|^2+\|\Bu\|^2\right).
\end{equation}
It is easy to see, that by an application of the obvious bounds
\begin{equation}
\label{5.24}
\|\BF-\nabla\Bu\|\leq h\|\nabla\Bu\|,\quad\text{and}\quad \|e(\BF)-e(\Bu)\|\leq h\|\nabla\Bu\|,
\end{equation}
we obtain from (\ref{5.23}) the partial estimate 
\begin{equation}
\label{5.25}
\|\nabla\Bu\|_{L^2(\Omega^h)}^2 \leq C\left(\frac{\|\Bu\cdot \Bn\|\cdot\|e(\Bu)\|}{h}+\|e(\Bu)\|^2+\|\Bu\|^2\right),
\end{equation}
for small enough $h.$ Next we extend the estimate (\ref{5.25}) in the normal direction to the entire $\Omega.$ The following simple lemma will be the key tool in the sequel. 
\begin{lemma}
\label{lem:5.2}
Assume $D_1\subset D_2\subset \mathbb R^n$ are open bounded connected Lipschitz domains. By Korn's first inequality, there exist constants $K_1$ and $K_2$ such that for any vector field $\BU\in H^1(D_2,\mathbb R^n),$ there exist skew-symmetric matrices $\BA_1,\BA_2\in\mathbb M^{n\times n},$ such that 
\begin{equation}
\label{5.26}
\|\nabla\BU-\BA_1\|_{L^2(D_1)} \leq K_1\|e(\BU)\|_{L^2(D_1)},\quad \|\nabla\BU-\BA_2\|_{L^2(D_2)} \leq K_2\|e(\BU)\|_{L^2(D_2)}.
\end{equation}
The assertion is that there exists a constant $C>0$ depending only on the quantities $K_1,K_2$ and $\frac{|D_2|}{|D_1|},$ such that for any 
vector field $\BU\in H^1(D_2, \mathbb R^n)$ one has
\begin{equation}
\label{5.27}
\|\nabla\BU\|_{L^2(D_2)} \leq C(\|\nabla\BU\|_{L^2(D_1)}+\|e(\BU)\|_{L^2(D_2)}).
\end{equation}
\end{lemma}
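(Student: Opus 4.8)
The plan is to compare the two rotations $\BA_1$ and $\BA_2$ on the smaller domain $D_1$ and thereby transfer the bound available on $D_1$ to one on $D_2$. First I would use the triangle inequality on $D_1$: from the two estimates in (\ref{5.26}) and the inclusion $D_1\subset D_2$,
\[
\|\BA_1-\BA_2\|_{L^2(D_1)}\leq\|\nabla\BU-\BA_1\|_{L^2(D_1)}+\|\nabla\BU-\BA_2\|_{L^2(D_1)}\leq K_1\|e(\BU)\|_{L^2(D_1)}+K_2\|e(\BU)\|_{L^2(D_2)}.
\]
Since $\BA_1-\BA_2$ is a constant matrix, $\|\BA_1-\BA_2\|_{L^2(D_1)}=|D_1|^{1/2}\,|\BA_1-\BA_2|$, so this reads $|\BA_1-\BA_2|\leq |D_1|^{-1/2}(K_1+K_2)\|e(\BU)\|_{L^2(D_2)}$. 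Multiplying by $|D_2|^{1/2}$ gives $\|\BA_1-\BA_2\|_{L^2(D_2)}\leq (|D_2|/|D_1|)^{1/2}(K_1+K_2)\|e(\BU)\|_{L^2(D_2)}$, which is controlled by the stated parameters alone.

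Next I would estimate $\|\nabla\BU\|_{L^2(D_2)}$ by inserting $\BA_1$: by the triangle inequality,
\[
\|\nabla\BU\|_{L^2(D_2)}\leq\|\nabla\BU-\BA_1\|_{L^2(D_2)}+\|\BA_1\|_{L^2(D_2)}.
\]
For the second term write $\|\BA_1\|_{L^2(D_2)}\leq\|\BA_2\|_{L^2(D_2)}+\|\BA_1-\BA_2\|_{L^2(D_2)}$, and bound $\|\BA_2\|_{L^2(D_2)}\leq\|\nabla\BU-\BA_2\|_{L^2(D_2)}+\|\nabla\BU\|_{L^2(D_2)}\leq K_2\|e(\BU)\|_{L^2(D_2)}+\|\nabla\BU\|_{L^2(D_2)}$. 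For the first term, $\|\nabla\BU-\BA_1\|_{L^2(D_2)}\leq\|\nabla\BU-\BA_1\|_{L^2(D_1)}+\|\nabla\BU-\BA_1\|_{L^2(D_2\setminus D_1)}$; the piece over $D_1$ is $\leq K_1\|e(\BU)\|_{L^2(D_1)}$ by (\ref{5.26}), but the piece over $D_2\setminus D_1$ is \emph{not} small and must instead be absorbed. So the cleaner route, which I would actually follow, is to estimate directly $\|\nabla\BU-\BA_1\|_{L^2(D_2\setminus D_1)}\leq\|\nabla\BU\|_{L^2(D_2)}+\|\BA_1\|_{L^2(D_2\setminus D_1)}$ and handle the constant-matrix norm as above — but this reintroduces $\|\nabla\BU\|_{L^2(D_2)}$ on the right with coefficient $1$, which cannot be absorbed.

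The correct way around this is to avoid splitting $D_2$ and instead bound $\|\nabla\BU-\BA_1\|_{L^2(D_2)}$ whole: $\|\nabla\BU-\BA_1\|_{L^2(D_2)}\leq\|\nabla\BU-\BA_2\|_{L^2(D_2)}+\|\BA_2-\BA_1\|_{L^2(D_2)}\leq K_2\|e(\BU)\|_{L^2(D_2)}+(|D_2|/|D_1|)^{1/2}(K_1+K_2)\|e(\BU)\|_{L^2(D_2)}$, using the comparison of rotations from the first paragraph. Then
\[
\|\nabla\BU\|_{L^2(D_2)}\leq\|\nabla\BU-\BA_1\|_{L^2(D_2)}+\|\BA_1\|_{L^2(D_2)},
\]
and for $\|\BA_1\|_{L^2(D_2)}$ I write $\|\BA_1\|_{L^2(D_2)}=(|D_2|/|D_1|)^{1/2}\|\BA_1\|_{L^2(D_1)}\leq(|D_2|/|D_1|)^{1/2}\big(\|\nabla\BU\|_{L^2(D_1)}+\|\nabla\BU-\BA_1\|_{L^2(D_1)}\big)\leq(|D_2|/|D_1|)^{1/2}\big(\|\nabla\BU\|_{L^2(D_1)}+K_1\|e(\BU)\|_{L^2(D_1)}\big)$. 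Collecting the terms and using $\|e(\BU)\|_{L^2(D_1)}\leq\|\nabla\BU\|_{L^2(D_1)}$ and $\|e(\BU)\|_{L^2(D_1)}\leq\|e(\BU)\|_{L^2(D_2)}$ yields (\ref{5.27}) with $C$ depending only on $K_1$, $K_2$ and $|D_2|/|D_1|$. The one subtle point — the main obstacle — is precisely the temptation to localize to $D_2\setminus D_1$, which fails; the resolution is that the rotation $\BA_1$ is globally a good competitor on all of $D_2$ once we know $\BA_1$ and $\BA_2$ are close, and that closeness comes for free from the overlap on $D_1$.
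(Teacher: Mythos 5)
Your proof is correct and follows essentially the same argument as the paper: compare $\BA_1$ and $\BA_2$ on the overlap $D_1$ via the triangle inequality, use that a constant matrix's $L^2$ norm scales by $(|D_2|/|D_1|)^{1/2}$ when passing from $D_1$ to $D_2$, and then chain triangle inequalities to bring in $\|\nabla\BU\|_{L^2(D_1)}$. The only difference is cosmetic — you insert $\BA_1$ where the paper inserts $\BA_2$ — and your discussion of why localizing to $D_2\setminus D_1$ fails does not affect the validity of the final argument.
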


\begin{proof}
The proof is done owing to (\ref{5.26}) and the triangle inequality. We have first by the triangle inequality that 
\begin{equation}
\label{5.28}
\|\BA_1-\BA_2\|_{L^2(D_1)}\leq \|\nabla\BU-\BA_1\|_{L^2(D_1)}+\|\nabla\BU-\BA_2\|_{L^2(D_2)}\leq (K_1+K_2)\|e(\BU)\|_{L^2(D_2)}.
\end{equation}
Consequently, we have owing to (\ref{5.26}), (\ref{5.28}) and by several applications of the triangle inequality 
\begin{align*}
\|\nabla&\BU\|_{L^2(D_2)} \leq \|\nabla\BU-\BA_2\|_{L^2(D_2)}+\|\BA_2\|_{L^2(D_2)}\\
&\leq K_2\|e(\BU)\|_{L^2(D_2)}+\left(\frac{|D_2|}{|D_1|}\right)^{1/2} \left(\|\BA_2-\BA_1\|_{L^2(D_1)}+\|\BA_1\|_{L^2(D_1)}\right)\\
& \leq \left(K_2+(K_1+K_2)\left(\frac{|D_2|}{|D_1|}\right)^{1/2} \right) \|e(\BU)\|_{L^2(D_2)}+
\left(\frac{|D_2|}{|D_1|}\right)^{1/2}(\|\BA_1-\nabla\BU\|_{L^2(D_1)}+\|\nabla\BU\|_{L^2(D_1)})\\
&\leq C(\|\nabla\BU\|_{L^2(D_1)}+\|e(\BU)\|_{L^2(D_2)}),
\end{align*}
which is (\ref{5.27}).
\end{proof}

Assume now $\bar\Bu=(\bar u_1,\bar u_2, \bar u_3)$ is $\Bu$ in Cartesian coordinates $\Bx=(x_1,x_2,x_3)$ and denote by $\bar\nabla$ the cartesian gradient. we divide the domains $\Omega$ and $\Omega^h$ into small pieces of order $h.$ Namely for $N=[\frac{1}{h}]+1$ denote 
\begin{align}
\label{5.29}
&\Omega_{i,j}=\left\{(t,\Gth,z)\in\Omega \ : \ \Gth\in \left(\frac{i}{N},\frac{i+1}{N}\right), z\in \left(\frac{Nz_1+j(z_2-z_1)}{N}, \frac{Nz_1+(j+1)(z_2-z_1)}{N}\right)\right\},\\ \nonumber
&\Omega_{i,j}^h=\{(t,\Gth,z)\in\Omega_{i,j} \ : \ t\in (-h,h)\} ,\quad   i,j=0,1,\dots,N-1.
\end{align}
Note that as Korn's first inequality is invariant under the variable change $x\to\lambda x,$ then so is the constant $C$ in (\ref{5.27}). Second, the domains $D_1=\Omega_{i,j}^h$ and $D_2=\Omega_{i,j}^h$ have uniform Lipschitz constants depending only on the parameters mid-surface $S$ and the functions $g_1^h, g_2^h,$ are of order $h,$ thus by the above remark and Lemma~\ref{lem:5.2} we have the estimate
\begin{equation}
\label{5.30}
\|\bar\nabla\bar\Bu\|_{L^2(\Omega_{ij})} \leq C(\|\bar\nabla\bar\Bu\|_{L^2(\Omega_{ij}^h)}+\|e(\bar\Bu)\|_{L^2(\Omega_{ij})}),\quad   i,j=0,1,\dots,N-1,
\end{equation} 
summing which over $i,j=0,1,\dots,N-1$ we arrive at 
\begin{equation}
\label{5.31}
\|\bar\nabla\bar\Bu\|_{L^2(\Omega)} \leq C(\|\bar\nabla\bar\Bu\|_{L^2(\Omega^h)}+\|e(\bar\Bu)\|_{L^2(\Omega)}).
\end{equation} 
It remains to notice that (\ref{5.25}) and the analogous estimate with $\Bu$ and $\nabla$ replaced by $\bar\Bu$ and $\bar\nabla$ respectively are equivalent, thus (\ref{5.25}) and (\ref{5.31}) yield (\ref{3.1}). The Ansatz proving the sharpness of (\ref{3.1}) and (\ref{3.2}) has been constructed in [\ref{bib:Harutyunyan.1}] to prove the sharpness of the appropriate Korn's first inequality for shells with positive Gaussian curvature, and it turns out to work also for the estimates (\ref{3.1}) and (\ref{3.2}). We will omit the additional calculation here to prove that the Ansatz has the requited property. It is given as
\begin{equation}
\label{6.1}
\begin{cases}
u_t=W(\frac{\Gth}{\sqrt{h}},z)\\
u_\Gth=-\frac{t\cdot W_{,\Gth}\left(\frac{\Gth}{\sqrt h},z\right)}{A_\Gth{\sqrt h}}\\
u_z=-\frac{t\cdot W_{,z}\left(\frac{\Gth}{\sqrt h},z\right)}{A_z},
\end{cases}
\end{equation}
where $W(\xi,\eta)\colon\mathbb R^2\to\mathbb R$ is a fixed smooth compactly supported function.

\end{proof}

\section{Some remarks}
The works [\ref{bib:Gra.Har.3}] and [\ref{bib:Harutyunyan.2}] deal with Korn's first inequality for shells (when $\Omega$ has a constant thickness) in the case when the mid-surface $S$ consists of a single patch $\Sigma.$ It has been proven in [\ref{bib:Gra.Har.3}], that if one of the principal curvatures of $S$ is zero and the other one has a constant sign, then the optimal constant $C_{II}$ in (\ref{1.2}) scales like $Ch^{-3/2}$ for vector fields $\Bu\in H^1(\Omega)$ that satisfy zero boundary condition on the thin face of $\Omega.$ In that case one can of course also choose $\BA=0$ in (\ref{1.2}). The work [\ref{bib:Harutyunyan.2}] deals with shells that have nonzero Gaussian curvatures, where it is proven that $C_{II}$ scales like $Ch^{-4/3}$ for shells with negative Gaussian curvature and like $Ch^{-1}$ for shells with positive Gaussian curvature. The vector field $\Bu\in H^1(\Omega)$ again satisfies zero boundary conditions on the thin face of $\Omega,$ and there is an additional condition $L<C_0$ for some constant $C_0$ depending only the parameters of $S.$ First of all these two results can be extended to thin domains by the localization argument. Second, for the purpose of Korn's first inequality with no boundary conditions (\ref{1.2}), it has been proven in [\ref{bib:Harutyunyan.2}], that shells with nonzero Gaussian curvature $K_G$ fulfill the inequalities
\begin{itemize}
\item[(i)] If $K_G>0$ then $\|\Bu_{out}\|\leq C(\|\Bu_{in}\|+\|e(\Bu)\|),$ where $\Bu_{in}=0$ on the thin face of $\Omega.$ Here $\Bu_{out}=\Bn\cdot\Bu$ and 
$\Bu_{in}=\Bu-\Bu_{out}$ are the out-of-plane and in-plane components of the field $\Bu$ respectively. 
\item[(ii)] If $K_G<0,$ then $\|\Bu_{out}\|\leq \frac{C}{h^{1/3}}(\|\Bu_{in}\|+\|e(\Bu)\|),$ where $\Bu_{in}=0$ on the thin face of $\Omega.$ 
\end{itemize}
By a division of $S$ into single patches $\Sigma$ that satisfy the condition $L<C_0,$ we get applying the estimate (\ref{3.1}) to the vector field 
$\Bv=(\varphi u_t,\varphi u_\Gth,\varphi u_z)$ (here $\varphi$ is a cut-off function in both $\Gth$ and $z$ directions) the corresponding estimates 
\begin{itemize}
\item[(i)] If $K_G>0$ then $\|\nabla\Bu\|^2\leq \frac{C}{h}(\|\Bu_{in}\|^2+\|e(\Bu)\|^2)$ for all $\Bu\in H^1(\Omega).$ 
\item[(ii)] If $K_G<0,$ then $\|\nabla\Bu\|^2\leq \frac{C}{h^{4/3}}(\|\Bu_{in}\|^2+\|e(\Bu)\|^2)$ for all $\Bu\in H^1(\Omega).$ 
\end{itemize} 
The above estimates are believed to be useful in pursuing (\ref{2.1}) as it is known that the out-of-plane component of $\Bu$ is always harder to bound than the in-plane component. We also conjecture that the optimal constants $C_I$ and $C_{II}$ have the same asymptotics for thin domains $\Omega$ as the thickness $h$ goes to zero. While the estimate $C_I\leq C_{II}$ is quite straightforward by considering the field $\Bv=\BI+\epsilon\Bu$ in (\ref{1.1}) and then 
taking $\epsilon\to 0,$ the reverse inequality $C_{II}\leq cC_I$ where $c$ does not depend on $\Bu$ and $h$ is not trivial and is task for future work.

\end{document}